\newtheorem{theorem}{Theorem}[section]
\newtheorem{lemma}[theorem]{Lemma}
\newtheorem{proposition}[theorem]{Proposition}
\newtheorem{corollary}[theorem]{Corollary}
\newtheorem{question}[theorem]{Question}
\theoremstyle{definition}}
\theoremstyle{definition}}
\theoremstyle{definition}}
\theoremstyle{definition}}
\newtheorem*{thmA}{Theorem~A}
\newtheorem*{thmB}{Theorem~B}
\numberwithin{equation}{section}
\def\C{{\mathbb C}}
\def\A{{\mathbb A}}
\def\B{{\mathbb B}}
\def\N{{\mathbb N}}
\def\Z{{\mathbb Z}}
\def\R{{\mathbb R}}
\def\K{{\mathbb K}}
\def\M{{\bf M}}
\def\T{{\bf T}}
\def\uu{{\cal U}}
\def\epsilon{\varepsilon}
\def\kappa{\varkappa}
\def\phi{\varphi}
\def\leq{\leqslant}
\def\geq{\geqslant}
\def\slim{\mathop{\hbox{$\overline{\hbox{\rm lim}}$}}\limits}
\def\ilim{\mathop{\hbox{$\underline{\hbox{\rm lim}}$}}\limits}
\def\supp{\hbox{\tt supp}\,}
\def\ker{\hbox{\tt ker}\,}
\def\re{\hbox{\tt Re}\,}
\def\uu{{\mathcal U}}
\title{On orbits of truncated convolution operators}
\author{Stanislav Shkarin}
\date{}
\begin{document}

\maketitle

\begin{abstract} We prove that a semigroup generated by a finitely
many truncated convolution operators on $L^p[0,1]$ with $1\leq
p<\infty$ is non-supercyclic. On the other hand, there is a
truncated convolution operator, which possesses irregular vectors.
\end{abstract}

\small \noindent{\bf MSC:} \ \ 47A16, 37A25

\noindent{\bf Keywords:} \ \ Cyclic operators, Volterra operator,
Riemann--Liouville operators, hypercyclic operators, supercyclic
operators, universal families \normalsize

\section{Introduction \label{s1}}\rm

Throughout the article all vector spaces are assumed to be over the
field $\K$ being either the field $\C$ of complex numbers or the
field $\R$ of real numbers, $\Z$ is the set of integers, $\Z_+$ is
the set of non-negative integers, $\R_+$ is the set of non-negative
real numbers and $\N$ is the set of positive integers. Symbol $L(X)$
stands for the space of continuous linear operators on a topological
vector space $X$ and $X^*$ is the space of continuous linear
functionals on $X$. A family ${\cal F}=\{F_a:a\in A\}$ of continuous
maps from a topological space $X$ to a topological space $Y$ is
called {\it universal} if there is $x\in X$ for which the orbit
$O({\cal F},x)=\{F_ax:a\in A\}$ is dense in $Y$. Such an $x$ is
called a {\it universal element} for $\cal F$. We use the symbol
$\uu({\cal F})$ to denote the set of universal elements for $\cal
F$. If $X$ is a topological vector space, and ${\cal F}$ is a
commutative subsemigroup of $L(X)$, then we call ${\cal F}$
hypercyclic if ${\cal F}$ is universal and the members of $\uu({\cal
F})$ are called {\it hypercyclic vectors} for ${\cal F}$. We say
that ${\cal F}$ is {\it supercyclic} if the semigroup ${\cal
F}_p=\{zT:z\in\K,\ T\in{\cal F}\}$ is hypercyclic and the
hypercyclic vectors for ${\cal F}_p$ are called {\it supercyclic
vectors} for ${\cal F}$. An orbit $O({\cal F}_p,x)$ will be called a
{\it projective orbit} of ${\cal F}$.  We refer to a tuple
$\T=(T_1,\dots,T_n)$ of commuting continuous linear operators on $X$
as {\it hypercyclic} (respectively, {\it supercyclic}) if the
semigroup generated by $\T$ is hypercyclic (respectively,
supercyclic). The concept of hypercyclic tuples of operators was
introduced and studied by Feldman \cite{feld}. In the case $n=1$, it
becomes the conventional hypercyclicity (or supercyclicity), which
has been widely studied, see the book \cite{bama-book} and
references therein.

Gallardo and Montes \cite{gm}, answering a question of Salas, proved
that the Volterra operator
\begin{equation}\label{volt}
Vf(x)=\int_0^x f(t)\,dt,
\end{equation}
acting on $L_p[0,1]$ for $1\leq p<\infty$, is non-supercyclic. This
lead to a quest of finding supercyclcic or even hypercyclic
operators as close as possible to the Volterra operator. In
\cite{ms1} it is observed that $L^2[0,1]$ admits a norm $\|\cdot\|$
defining a weaker topology such that $V$ is $\|\cdot\|$-continuous
and the continuous extension of $V$ to the completion of
$(L^2[0,1],\|\cdot\|)$ is hypercyclic. The mainstream of the above
quest dealt with searching of hypercyclic or supercyclic operators
commuting with $V$.

Truncated convolution operators form an important class of operators
commuting with $V$. Let $C_0[0,1]$ be the Banach space of continuous
functions $f:[0,1]\to\C$ satisfying $f(0)=0$ and carrying the
$\sup$-norm and let ${\bf M}$ be the space of finite
$\sigma$-additive $\K$-valued Borel measures $\mu$ on $[0,1)$. For
$\mu\in{\bf M}$, we consider the operator $C_\mu\in L(C_0[0,1])$
acting according to the formula
$$
C_\mu f(x)=\int_0^1 f_x(t)\,d\mu,\ \ \text{where $f_x(t)=f(x-t)$ if
$t\leq x$ and $f_x(t)=0$ if $t>x$.}
$$
In other words, $C_\mu f$ is the restriction to $[0,1]$ of the
convolution of $f$ and $\mu$. According to the well-known properties
of convolutions, $\|C_\mu f\|_p\leq \|\mu\|\|f\|_p$ for every $f\in
C_0[0,1]$, where $\|\mu\|$ is the full variation of $\mu$ and
$\|f\|_p$ is the norm of $f$ in $L^p[0,1]$ for $1\leq p\leq\infty$.
Thus $C_\mu$ extends uniquely to a continuous linear operator on
$L^p[0,1]$ for $1\leq p<\infty$ and the norm of this operator does
not exceed $\|\mu\|$. The same holds for $L^\infty[0,1]$: the
obstacle of $C_0[0,1]$ being non-dense in $L^\infty[0,1]$ can be
easily overcome by either using the density of $C_0[0,1]$ in
$L^\infty[0,1]$ in $*$-weak topology and $*$-weak continuity of
$C_\mu$ or by simply restricting to the non-closed invariant
subspace $L^\infty[0,1]$ of the extension of $C_\mu$ to $L^1[0,1]$.
This allows to treat each $C_\mu$ as a member of each $L(L^p[0,1])$.
From the basic properties of convolutions it also follows that the
set
$$
\A=\{C_\mu:\mu\in{\bf M}\}
$$
of truncated convolution operators is a commutative subalgebra of
$L(C_0[0,1])$ and of each $L(L^p[0,1])$. For instance, $C_\mu
C_\nu=C_\eta$, where $\eta$ is the restriction to $[0,1)$ of the
convolution of $\mu$ and $\nu$. Since $V=C_\lambda$ with $\lambda$
being the Lebesgue measure on $[0,1)$, $\A$ consists of operators
commuting with $V$. It is worth noting \cite{ms2} that on
$L^1[0,1]$, $C_0[0,1]$ and on $L^\infty[0,1]$ there are no other
continuous linear operators commuting with $V$, while this fails for
$L^p[0,1]$ with $1<p<\infty$.

In \cite{ms1,leon} it is shown that $V$ is not weakly supercyclic
(=non-supercyclic on $L^p[0,1]$ carrying the weak topology). In
\cite{mbs,leon,eve} it is demonstrated that certain truncated
convolution operators are not weakly supercyclic. L\'eon and
Piqueras \cite{leon} raised a question whether any $T\in
L(L^p[0,1])$ commuting with $V$ is not weakly supercyclic. This
question was answered affirmatively in \cite{72}. Still there
remained a possibility of existence of a hypercyclic or at least
supercyclic tuple of truncated convolution operators.

\begin{theorem} \label{mai1} Let $T_1,\dots,T_n\in\A$. Then for any
$f\in L^1[0,1]$, the projective orbit
$$
\{wT_1^{k_1}\dots T_n^{k_n}f:k_j\in\Z_+,\ w\in\K\}
$$
is nowhere dense in $L^1[0,1]$ equipped with the weak topology.
\end{theorem}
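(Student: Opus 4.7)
My plan is to leverage the structure of the commutative Banach algebra $\A$ and extend the single-operator technique of \cite{72} to commuting tuples. For each $j$, set $c_j:=\mu_j(\{0\})$ and $N_j:=C_{\mu_j-c_j\delta_0}$, so that $T_j=c_jI+N_j$ with $\mu_j-c_j\delta_0$ carrying no atom at $0$. A standard application of Titchmarsh's convolution theorem then shows each $N_j$ is quasinilpotent on $L^1[0,1]$: for $\varepsilon>0$ one decomposes $\mu_j-c_j\delta_0=\eta_j^{(\varepsilon)}+\zeta_j^{(\varepsilon)}$, where $\eta_j^{(\varepsilon)}$ lives on $[0,\varepsilon]$ with $\|\eta_j^{(\varepsilon)}\|\to 0$ as $\varepsilon\to 0$ (by absence of an atom at $0$) and $\zeta_j^{(\varepsilon)}$ lives on $[\varepsilon,1)$, so that $C_{\zeta_j^{(\varepsilon)}}^m=0$ for $m>1/\varepsilon$ by Titchmarsh; a commuting binomial expansion then yields $\|N_j^k\|^{1/k}\to 0$.

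Commutativity of the $T_j$'s gives the multi-binomial expansion
$$
T_1^{k_1}\cdots T_n^{k_n}f=\sum_{\vec i\leq\vec k}\binom{\vec k}{\vec i}\,c^{\vec k-\vec i}\,N_1^{i_1}\cdots N_n^{i_n}f,
$$
where $\binom{\vec k}{\vec i}=\prod_j\binom{k_j}{i_j}$ and $c^{\vec i}=\prod_j c_j^{i_j}$. Quasinilpotence together with commutativity produces the estimate $\|N_1^{i_1}\cdots N_n^{i_n}f\|_1\leq C_\varepsilon \varepsilon^{i_1+\cdots+i_n}$ for every $\varepsilon>0$, which summed binomially gives, for every $g\in L^\infty[0,1]$,
$$
|\langle T^{\vec k}f,g\rangle|\leq C_{\varepsilon,f,g}\prod_{j=1}^{n}(|c_j|+\varepsilon)^{k_j}.
$$
The same super-exponential decay ensures that the operator-valued entire function $\vec s\mapsto\exp(s_1N_1+\cdots+s_nN_n)$ is well defined, and using $\binom{k}{i}\approx k^i/i!$ yields the heuristic $T^{\vec k}f\approx c^{\vec k}\exp((k_1/c_1)N_1+\cdots+(k_n/c_n)N_n)f$ when all $c_j\neq 0$, with a simpler analogue when some $c_j=0$ (in which case $T_j=N_j$ is itself quasinilpotent, forcing $T_j^{k_j}f\to 0$ rapidly).

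To conclude nowhere-denseness, I would show that the projective orbit is, up to controlled error, contained in the image of the real-analytic map $(w,\vec s)\mapsto w\exp(s_1N_1+\cdots+s_nN_n)f$ from $\K\times\R_+^n$ into $L^1[0,1]$. Composing with any finite family $g_1,\ldots,g_m\in L^\infty[0,1]$ produces a real-analytic map $\K\times\R_+^n\to\K^m$ whose source has real dimension $n+1$ ($\K=\R$) or $n+2$ ($\K=\C$); choosing $m$ so that the target has strictly larger real dimension, the image is a real-analytic set of positive codimension, hence nowhere dense in $\K^m$. Pulling back through the evaluation map allows one to produce, in every weakly open $U\subset L^1[0,1]$, a point $v\in U$ whose pairings $(\langle v,g_i\rangle)_i$ lie outside this closed image, and hence a weak neighbourhood of $v$ inside $U$ disjoint from the projective orbit.

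The main obstacle is making the ``approximate'' exponential representation of $T^{\vec k}f$ sufficiently rigorous to control not just the orbit but its weak closure, since a countable union of $\K$-lines in $L^1[0,1]$ may otherwise be weakly dense. The discrepancy between $\binom{k_j}{i_j}$ and $k_j^{i_j}/i_j!$ must be absorbed as a higher-order correction inside the real-analytic parametrisation, and the mixed regime where some $c_j$ vanish while others do not will require a separate (though easier) treatment—extending in this way the rigidity argument of \cite{72} from one parameter to several.
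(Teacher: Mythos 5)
Your proposal takes a genuinely different route from the paper, but it contains a gap that I do not think can be closed without introducing a fundamentally new ingredient. The decomposition $T_j=c_jI+N_j$ with $N_j\in\A$ quasinilpotent is sound, and in fact one can do better than the binomial heuristic: for $c_j\neq0$ one has the \emph{exact} identity $T_j^{k_j}=c_j^{k_j}e^{k_jA_j}$ with $A_j=\ln(I+N_j/c_j)$ quasinilpotent in $\A$, so the orbit is literally (not approximately) contained in the cone $\{we^{s_1A_1+\cdots+s_nA_n}f:w\in\K,\ s\in\R_+^n\}$ when all $c_j\neq 0$. This removes your worry about the discrepancy between $\binom{k}{i}$ and $k^i/i!$. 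The vanishing-$c_j$ case is also manageable, as the paper's reduction from Theorem~\ref{mai1} to Theorem~\ref{mai2} shows.

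The fatal difficulty is the final dimension-counting step. From ``the source has real dimension $n+1$ or $n+2$'' you conclude that the image of $(w,\vec s)\mapsto\bigl(g_i(we^{s\cdot A}f)\bigr)_i$ in $\K^m$ is ``a real-analytic set of positive codimension, hence nowhere dense.'' This is not correct: the image of a real-analytic map of low rank need not be nowhere dense, only meagre. A concrete obstruction is the Lissajous curve $t\mapsto(\sin t,\sin\alpha t)$ with $\alpha$ irrational, which is a real-analytic image of $\R$ that is \emph{dense} in $[-1,1]^2$. Nothing in your argument rules out the analogous behaviour here: the evaluations $s\mapsto\bigl(g_i(e^{s\cdot A}f)\bigr)_i$ could a priori wind densely through an open subset of $\K^m$. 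Precisely because of this, the paper does not argue by dimension at all; it instead uses the multiplication operator $M$, $Mh(x)=xh(x)$, and the commutator identity (Lemma~\ref{ml}) $[T^{[t+\mathbf 1]},M]=N(t)R_{[\lambda(t)]}T^{[t]}$ with $N(t)=n+t_1+\cdots+t_n$. Pairing with a well-chosen functional pins $N(t)$ to a bounded range on any orbit point lying in a suitable weakly open set, which contradicts Lemma~\ref{gene01}. The hypothesis ``$0$ is not in the convex span of $R_1,\dots,R_n$'' that this requires is then established by the genuinely nontrivial Lemma~\ref{ML}, which uses Fourier transforms of measures, the indicator function of entire functions of exponential type, and Cartwright's theorem on completely regular growth. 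None of that machinery appears in your outline, and the minimality-on-$k+m$ induction needed to handle the case where $0$ \emph{is} in the convex span is also missing. In short, the commutator/derivation structure with $M$ is the key idea your proposal lacks, and without it the finite-dimensional-parametrisation argument does not establish nowhere density in the weak topology.
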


The usual comparing the topologies argument provides the following
corollary.

\begin{corollary} \label{mai11} There are no tuples of truncated
convolution operators weakly supercyclic when acting on $L^p[0,1]$
with $1\leq p<\infty$.
\end{corollary}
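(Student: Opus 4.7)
The plan is to deduce the corollary from Theorem~\ref{mai1} by the standard topology-comparison trick. I would proceed by contradiction: assume some tuple $\T=(C_{\mu_1},\dots,C_{\mu_n})$ of truncated convolution operators is weakly supercyclic on $L^p[0,1]$ for some $1\leq p<\infty$, and fix a weakly supercyclic vector $f\in L^p[0,1]$. Since $[0,1]$ carries a finite measure, H\"older's inequality yields a continuous inclusion $L^p[0,1]\hookrightarrow L^1[0,1]$, and each $C_{\mu_j}$ acting on $L^p[0,1]$ is precisely the restriction of the corresponding operator on $L^1[0,1]$, so the projective orbit of $f$ is the same set whether viewed in $L^p[0,1]$ or in $L^1[0,1]$.

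The key observation is that this inclusion is also continuous as a map $(L^p[0,1],w)\to(L^1[0,1],w)$. Indeed, $(L^1[0,1])^*=L^\infty[0,1]$, and on the finite measure space $[0,1]$ one has $L^\infty[0,1]\subseteq L^{p'}[0,1]=(L^p[0,1])^*$, so every weakly continuous linear functional on $L^1[0,1]$ restricts to a weakly continuous functional on $L^p[0,1]$. Therefore, if
\[
O=\{wC_{\mu_1}^{k_1}\cdots C_{\mu_n}^{k_n}f:k_j\in\Z_+,\ w\in\K\}
\]
is weakly dense in $L^p[0,1]$ by assumption, then its weak closure in $L^1[0,1]$ contains $L^p[0,1]$.

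To finish, I would invoke the fact that $L^p[0,1]$ is norm-dense, hence weakly dense, in $L^1[0,1]$, which upgrades the previous step to weak density of $O$ in the whole of $L^1[0,1]$. This flatly contradicts Theorem~\ref{mai1}, according to which $O$ is nowhere dense in $(L^1[0,1],w)$. The argument is essentially mechanical once the weak-to-weak continuity of the inclusion is in hand, and I do not anticipate a serious obstacle; the only point requiring minor care is that the operators $C_{\mu_j}$ on $L^p[0,1]$ and on $L^1[0,1]$ agree on $L^p[0,1]$, which is ensured by the convolution-based definition recalled in Section~\ref{s1}.
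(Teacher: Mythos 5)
Your proof is correct, and it is precisely the ``usual comparing the topologies argument'' that the paper invokes in one line without spelling it out: you verify that the inclusion $L^p[0,1]\hookrightarrow L^1[0,1]$ is weak-to-weak continuous with weakly dense range, transfer the somewhere-dense projective orbit from $(L^p,w)$ to $(L^1,w)$, and contradict Theorem~\ref{mai1}.
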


Since only truncated convolution operators commute with $V$ acting
on $L^1[0,1]$, the following result holds.

\begin{corollary} \label{mai12} There are no weakly supercyclic tuples of
operators on $L^1[0,1]$ commuting with $V$.
\end{corollary}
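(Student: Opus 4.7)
The plan is to reduce Corollary~\ref{mai12} directly to Corollary~\ref{mai11} via the characterization of the commutant of $V$ on $L^1[0,1]$ recalled just before the statement. Let $(T_1,\dots,T_n)$ be a tuple of pairwise commuting operators in $L(L^1[0,1])$ each commuting with $V$. The result from \cite{ms2} quoted in the excerpt asserts that every continuous linear operator on $L^1[0,1]$ commuting with $V$ is of the form $C_\mu$ for some $\mu\in\M$. Applying this to each coordinate gives $T_j=C_{\mu_j}\in\A$ for suitable $\mu_j\in\M$.

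Now I would simply invoke Corollary~\ref{mai11}: the tuple $(T_1,\dots,T_n)$ is a tuple of truncated convolution operators acting on $L^p[0,1]$ in the particular case $p=1$, and by that corollary it cannot be weakly supercyclic. This is exactly the conclusion of Corollary~\ref{mai12}.

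There is essentially no obstacle to overcome: the entire content is packaged in the commutant description of $V$ on $L^1[0,1]$ from \cite{ms2} together with Corollary~\ref{mai11}. The only tiny point worth noting is that the pairwise commutativity required by the definition of a supercyclic tuple is automatic once each $T_j$ is seen to lie in $\A$, because $\A$ is a commutative subalgebra of $L(L^1[0,1])$, so no extra hypothesis beyond ``each $T_j$ commutes with $V$'' is needed to apply Corollary~\ref{mai11}.
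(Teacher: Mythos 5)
Your proof is correct and is exactly the paper's argument: the commutant characterization from \cite{ms2} forces every operator in the tuple to be a truncated convolution operator, and Corollary~\ref{mai11} then applies immediately. Your side remark that pairwise commutativity comes for free from $\A$ being a commutative algebra is accurate, though the paper's definition of a tuple already builds in commutativity, so it is not strictly needed.
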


Our method applies not only to finitely generated semigroups. For
example, it also takes care of the semigroup of the
Riemann--Liouville operators, which form a subsemigroup of the
truncated convolution operators. Namely,
$$
V^zf(x)=\frac1{\Gamma(z)}\int_0^x f(t) (x-t)^{z-1}\,dt\ \ \text{with
$z\in \Pi=\{z\in\C:\re z>0\}$},
$$
where $\Gamma$ is the Euler's gamma-function. Of course, to consider
$V^z$ with non-real $z$, we need the underlying space to be over
$\C$. Clearly, $V^z=C_{\mu_z}$ with $\mu_z$ being the absolutely
continuous measure on $[0,1)$ with the density
$a_z(x)=\frac{x^{z-1}}{\Gamma(z)}$. Since $a_z\in L^1[0,1]$ for
every $z\in\Pi$, each $V^z$ is a truncated convolution operator and
therefore belongs to $\A$. Moreover, it is easy to verify that
$V^zV^w=V^{z+w}$ for every $z,w\in\Pi$ and $V=V^1$. Thus
$\{V^z\}_{z\in\Pi}$ is a semigroup and $V^n$ is exactly the $n^{\rm
th}$ power of $V$, which justifies the notation $V^z$. The map
$z\mapsto V^z$ from $\Pi$ to $L(L^p[0,1])$ is operator norm
continuous and holomorphic. Thus $\{V^z\}_{z\in\Pi}$ is a
holomorphic operator norm continuous semigroup of operators acting
on $L^p[0,1]$. In \cite{55} it is shown that for every
$\alpha\in(0,\pi/2)$, the subsemigroup $\{V^{re^{i\theta}}:r>0,\
-\alpha<\theta<\alpha\}$ is non-supercyclic on $L^p[0,1]$ for $1\leq
p<\infty$. We prove the following stronger result.

\begin{proposition}\label{sd} For every $f\in L^1[0,1]$, the set
$\{wV^zf:z\in\Pi,\ w\in\C\}$ is nowhere dense in $L^1[0,1]$ with
respect to the weak topology. In particular, the semigroup
$\{V^z\}_{z\in\Pi}$ is not weakly supercyclic.
\end{proposition}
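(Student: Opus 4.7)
The plan is to adapt the method used for Theorem~\ref{mai1} to the continuous semigroup $\{V^z\}_{z\in\Pi}$, exploiting the holomorphy (and operator-norm continuity) of $z\mapsto V^z$ already recorded in the text above. The case $\alpha(f):=\sup\{a\geq 0:f|_{[0,a]}=0\text{ a.e.}\}>0$ is handled cleanly by Titchmarsh's convolution theorem. Since the density $a_z(x)=x^{z-1}/\Gamma(z)$ of $\mu_z$ is nonvanishing near $0$, $\alpha(\mu_z)=0$, so $\alpha(V^zf)\geq\alpha(f)$ for every $z\in\Pi$. The projective orbit then lies in the proper norm-closed (hence weakly closed) subspace $\{g\in L^1[0,1]:g|_{[0,\alpha(f)]}=0\}$, which has empty weak interior and is therefore weakly nowhere dense.

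The remaining and main case is $\alpha(f)=0$. Here the strategy is to produce finitely many functionals $\psi_1,\dots,\psi_N\in L^\infty[0,1]$ with $N\geq 3$ and a nonzero homogeneous polynomial $P$ on $\C^N$ such that $P(\psi_1(g),\dots,\psi_N(g))=0$ for every $g\in\{wV^zf:z\in\Pi,\,w\in\C\}$. By weak continuity of each $\psi_i$, this identity extends to the weak closure, placing it inside the complex-analytic hypersurface $\Sigma=\{g\in L^1[0,1]:P(\psi_1(g),\dots,\psi_N(g))=0\}$. An arbitrary weak open neighborhood of a point $g_0\in L^1[0,1]$ can be refined, by adjoining $\psi_1,\dots,\psi_N$ to its defining family of functionals, to a sub-neighborhood $U_0=\{g:|\psi_i(g-g_0)|<\delta,\ i=1,\dots,N\}$; its image under $(\psi_1,\dots,\psi_N):L^1[0,1]\to\C^N$ is an open polydisk around $(\psi_i(g_0))$, which cannot lie in the complex codimension-one set $\{P=0\}$. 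Hence $\Sigma$ has empty weak interior, and the projective orbit is weakly nowhere dense.

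A natural source of candidate functionals is the Laplace transform. Extending $V^zf$ from $[0,1]$ to $[0,\infty)$ via $\tilde V^zf=\mu_z*f$, one has $\int_0^\infty e^{-sx}\tilde V^zf(x)\,dx=s^{-z}\hat f(s)$. Choosing $s_1,s_2,s_3>0$ in geometric progression ($s_1s_3=s_2^2$) with $\hat f(s_j)\neq 0$, the quantities $F_j(z):=s_j^{-z}\hat f(s_j)$ satisfy the exact algebraic identity $F_1(z)F_3(z)=\gamma F_2(z)^2$ with $\gamma=\hat f(s_1)\hat f(s_3)/\hat f(s_2)^2$. The truncated counterparts $\psi_{s_j}(g):=\int_0^1 e^{-s_jx}g(x)\,dx\in L^\infty[0,1]^*$ realize this identity only modulo the exponentially small boundary correction $\int_1^\infty e^{-s_jx}\tilde V^zf(x)\,dx$. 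The main obstacle will be eliminating this correction to obtain an honest algebraic relation on the orbit itself; I expect one either modifies the $\psi_{s_j}$ by intrinsic counter-terms built from iterates of $V$ acting on $g$ (using $V\circ V^z=V^{z+1}$ to cancel the boundary contribution), or replaces the Laplace functionals altogether by test functions adapted to the scaling identity $(V^zf)(\lambda x)=\lambda^zV^z(f(\lambda\cdot))(x)$. Once an exact algebraic relation is secured, the polydisk argument above closes the proof.
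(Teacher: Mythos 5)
Your reduction in the case $\alpha(f)>0$ is correct and matches the paper. In the main case $\alpha(f)=0$, however, you have not actually produced a proof: you explicitly concede that the Laplace functionals $\psi_{s_j}$ only satisfy the quadratic relation $\psi_{s_1}\psi_{s_3}=\gamma\psi_{s_2}^2$ up to the boundary correction $\int_1^\infty e^{-s_jx}\widetilde{V^zf}(x)\,dx$, and you only speculate (``I expect one either modifies\dots or replaces\dots'') about how that error might be removed. This is the crux of the matter, and it is a genuine obstacle, not a routine one: the correction term depends on the restriction of $\mu_z*f$ to $(1,\infty)$, which is not a linear (or any simple) functional of $g=V^zf|_{[0,1]}$, so there is no obvious ``counter-term'' built from $g$ alone that cancels it; and the scaling identity $(V^zf)(\lambda x)=\lambda^z V^z\bigl(f(\lambda\,\cdot\,)\bigr)(x)$ involves a rescaled $f$, so it does not yield an exact relation on the orbit of a fixed $f$ either. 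Thus the central algebraic identity on which the entire hypersurface argument rests is never established.

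For comparison, the paper avoids the need for any exact algebraic relation among functionals of the orbit. It instead exploits the commutator identity $V^zM-MV^z=-zV^{z+1}$, where $Mh(x)=xh(x)$: after choosing, via Lemma~\ref{trco}, injective dense-range truncated convolution operators $B,C$ with $CMf=Bf$, one derives $(CM-B)V^{z+1}f=(z+1)(CV)V^{z+1}f$ for all $\re z>-1$. The factor $(z+1)$ on the right is what does the work. Assuming the orbit is somewhere dense, Lemma~\ref{weak} shows $V(\Omega)$ is somewhere dense; picking a weak interior point $g$ of its closure and a functional $\phi$ with $(CV)^*\phi(g)\neq 0$ yields a weak neighbourhood on which $|(CM-B)^*\phi|<c\,|(CV)^*\phi|$, forcing $|z+1|<c$ for any orbit element landing there. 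But Lemma~\ref{tv1} shows the bounded-parameter part $\{wV^{z+1}f:\re z\geq 0,\ |z|\leq c\}$ is weakly nowhere dense, so some orbit element with $|z|>c$ must land in the neighbourhood, a contradiction. This sidesteps your boundary-correction problem entirely; if you wish to salvage your approach, you would need to demonstrate an actual exact identity, which at present you have not done.
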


In order to compensate for the lack of chaotic behaviour of the
orbits of operators commuting with $V$ in terms of the density in
the underlying space, we show that these operators can exhibit
chaotic behaviour in terms of the norms of the members of the orbit.
The following definition is due to Beauzamy \cite{irre}. Let $X$ be
a Banach space and $x\in X$. We say that $x$ is an {\it irregular
vector} for $T\in L(X)$ if $\ilim\limits_{n\to\infty}\|T^nx\|=0$ and
$\slim\limits_{n\to\infty}\|T^nx\|=\infty$. The concept of
irregularity was studied by Prajitura \cite{pra1}. It is worth
noting that Smith \cite{smith} constructed a non-hypercyclic
continuous linear operator $T$ on a separable Hilbert space such
that each non-zero vector is irregular for $T$.

\begin{theorem}\label{main2} There are a truncated convolution operator
$T$ and $f\in C_0[0,1]$ such that
$$
\text{$\ilim\limits_{n\to\infty}\|T^nf\|_\infty=0$\ \ and\ \
$\slim\limits_{n\to\infty}\|T^nf\|_1=\infty$.}
$$
In particular, $f$ is an irregular vector for $T$ acting on
$L^p[0,1]$ for each $p\in[1,\infty]$.
\end{theorem}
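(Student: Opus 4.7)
The plan is to construct the pair $(T,f)$ explicitly, guided by two preliminary reductions. First, since $\|g\|_1\le\|g\|_\infty$ on $[0,1]$, the two required conditions force $\|T^nf\|_\infty$ itself to oscillate between arbitrarily small and arbitrarily large values along disjoint subsequences. Second, if $\mu(\{0\})=0$ then $C_\mu$ is compact and quasinilpotent on each $L^p$, so $\|T^n\|\to 0$ super-exponentially and both norms tend to $0$ along every orbit; hence $\mu(\{0\})\ne 0$ is forced. After rescaling we may suppose $\mu=\delta_0+\nu$ with $\nu$ atomless, so $T=I+C_\nu$ has spectrum $\{1\}$ and $T^n=\sum_{k=0}^n\binom{n}{k}C_\nu^k$.

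The main construction exploits the near-spectral action of $T$ on the exponentials $f_s(x)=e^{sx}$ via the Laplace transform $\hat\mu(s)=1+\hat\nu(s)$. A direct computation gives
\[
Tf_s(x)=\hat\mu(s)f_s(x)+R_s(x),\qquad R_s(x)=-e^{sx}\int_x^1 e^{-st}\,d\nu(t),
\]
so $f_s$ is an approximate eigenfunction of $T$ with eigenvalue $\hat\mu(s)$ and error $R_s$. I would choose $\nu$ (for instance a countable combination $\sum_j\beta_j\delta_{a_j}$ of complex point masses placed in $(0,1)$) so that along a sequence $s_k\to\infty$ one has $|\hat\mu(s_k)|=1$ and the phases $\theta_k=\arg\hat\mu(s_k)$ are $\Q$-linearly independent. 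Define
\[
f(x)=\sum_{k\ge 1}c_k\bigl(f_{s_k}(x)-1\bigr)\in C_0[0,1],
\]
where the subtraction of $1$ ensures $f(0)=0$ and the coefficients $c_k>0$ are to be chosen rapidly decreasing. Construct the pairs $(s_k,c_k)$ inductively: at stage $k$, pick $c_k$ small enough that the new component does not spoil the estimates from previous stages, and then, by simultaneous Diophantine approximation applied to the rotation vector $(\theta_1,\ldots,\theta_k)$, locate iterates $n_k$ on which all relevant phases simultaneously approach $\pi\pmod{2\pi}$, producing destructive interference of the principal terms and $\|T^{n_k}f\|_\infty<1/k$. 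Independently, pick iterates $m_k$ at which a single component is constructively reinforced so its $L^1$-contribution exceeds $k$, yielding $\|T^{m_k}f\|_1>k$.

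The principal obstacle is controlling the truncation error $T^nf_s-\hat\mu(s)^nf_s$ under iteration: the naive telescoping bound is of order $n\|\mu\|^{n-1}\|R_s\|_\infty$, which for $|\hat\mu(s)|=1$ would swamp the principal terms and any cancellations among them. A refined analysis is therefore needed, either by choosing $\nu$ with rapidly decaying Laplace transform and taking $s_k$ with large imaginary parts (so that $R_{s_k}$ is small), or by working directly with an explicit finite-support $\nu$ and analysing the truncated iterated convolution $\nu^{*n}$ combinatorially. This delicate balance --- equidistribution-based cancellation for the $\liminf$-subsequence, explicit growth for the $\limsup$-subsequence, and telescoping control of the series tails on both subsequences --- is the heart of the proof.
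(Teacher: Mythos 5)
Your proposal takes a genuinely different route from the paper, but it contains a gap that you yourself flag and do not close. The paper's proof is a Baire-category argument: Lemma~\ref{asy} shows that the sets $\{(a,f):\|R_a^nf\|_1\to\infty\}$ and $\{(a,f):\|R_a^nf\|_\infty\to 0\}$, with $R_a=I+C_{\nu_a}$, are each dense in $L^1[0,1]\times C_0[0,1]$, and then a routine intersection of open dense sets produces a dense $G_\delta$ of pairs $(a,f)$ with $\slim\|R_a^nf\|_1=\infty$ and $\ilim\|R_a^nf\|_\infty=0$. The two density statements are in turn deduced from Theorems~A and~B (quoted from~\cite{mbs}), which give the precise asymptotics of $\|(I+V^r(be^{i\alpha}I+W))^nf\|_p$ and the achievable decay rates of $\|(I-cV)^nf\|_p$. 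There are no approximate eigenfunctions, no equidistribution, no Diophantine step.

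Your plan instead aims at an explicit construction via approximate eigenfunctions $f_s(x)=e^{sx}$ with $Tf_s=\hat\mu(s)f_s+R_s$, atoms placed so that $|\hat\mu(s_k)|=1$ with $\Q$-independent arguments, and simultaneous Diophantine approximation to engineer destructive interference (for $\ilim\|\cdot\|_\infty=0$) and constructive reinforcement (for $\slim\|\cdot\|_1=\infty$). The idea is plausible in outline, but, as you note yourself, the whole thing hinges on iterating the defect: the naive bound on $T^nf_s-\hat\mu(s)^nf_s$ is of order $n\|\mu\|^{n-1}\|R_s\|_\infty$, and with $\|\mu\|>1$ (forced, since $\mu(\{0\})=1$ and $\nu\ne 0$) this grows and dominates the terms you want to cancel. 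Saying ``a refined analysis is therefore needed'' and then listing two candidate strategies without executing either is not a proof; the hard analytic content (which in the paper is outsourced to Theorems~A and~B and the Cartwright-type machinery behind them) is exactly this missing estimate. As a smaller point, $\mu(\{0\})=0$ does make $C_\mu$ quasinilpotent, which is enough for your reduction, but the accompanying claim that $C_\mu$ is then compact is false (e.g.\ $C_{\delta_{1/2}}$ is a truncated translation, which is not compact on $L^p[0,1]$).
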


\section{Obstacles to weak supercyclicity}

In this section we develop techniques for the proof of
Theorem~\ref{mai1} and prove Proposition~\ref{sd}. We say that a
topological vector space $X$ carries a {\it weak topology} if the
topology of $X$ is the weakest topology making each $f\in Y$
continuous, where $Y$ is a fixed linear space of linear functionals
on $X$ separating the points of $X$. Of course, any weak topology is
locally convex. We say that a subset $A$ of a topological space $X$
is {\it somewhere dense} if it is not nowhere dense.

The following lemma exhibits a feature of weak topologies. Its
conclusion fails, for example, for infinite dimensional Banach
spaces.

\begin{lemma}\label{weak} Let $X$ and $Y$ be topological vector
spaces with weak topology and $A:X\to Y$ be a continuous linear
operator with dense range. Then $A(M)$ is somewhere dense in $Y$ for
every $M$ somewhere dense in $X$.
\end{lemma}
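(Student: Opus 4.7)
The plan is to reduce somewhere-denseness of $A(M)$ in $Y$ to the claim that $\overline{A(M)}$ contains a nonempty weakly open set. Since $M$ is somewhere dense, a basic weak open neighbourhood
$$
U=x_0+\{x\in X:|g_i(x)|<\epsilon,\ i=1,\dots,n\}
$$
lies inside $\overline M$, where $g_1,\dots,g_n$ are in the defining family of functionals for $X$. Setting $X_0=\bigcap_i \ker g_i$, one has $x_0+X_0\subseteq U\subseteq\overline M$, so continuity of $A$ together with the closedness of $\overline{A(M)}$ gives
$$
A(x_0)+H\subseteq\overline{A(M)},\qquad H:=\overline{A(X_0)}.
$$
It will therefore suffice to prove that the weakly closed subspace $H\subseteq Y$ is in fact weakly \emph{open}.

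The key step is to compute the annihilator $H^\perp=\{f\in Y_Y:f|_H=0\}$ inside the dual $Y_Y$ of $Y$ and show $\dim H^\perp\leq n$. Given $f\in H^\perp$, the composition $f\circ A$ is weakly continuous on $X$, hence belongs to the defining family $Y_X$, and vanishes on $X_0=\bigcap_i\ker g_i$; a standard linear-algebra fact then forces $f\circ A\in\spann(g_1,\dots,g_n)$. The assignment $f\mapsto f\circ A$ thus maps $H^\perp$ linearly into an $n$-dimensional space; and its kernel is trivial, since $f\circ A=0$ means that $f\in Y_Y$ vanishes on the dense subspace $A(X)$, whence $f\equiv 0$ by weak continuity. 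Consequently $\dim H^\perp\leq n$. Taking a basis $f_1,\dots,f_m$ of $H^\perp$ (possibly empty), Hahn--Banach separation in the locally convex Hausdorff space $Y$ yields $H=\bigcap_{j=1}^m \ker f_j$ (or $H=Y$ if $m=0$), which is a weakly open subspace of $Y$, and the proof is complete.

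The main conceptual subtlety, rather than any technical hurdle, is the passage from $A(X_0)$ to its closure $H$: one cannot expect $A(X_0)$ itself to be weakly open, but the annihilator computation shows that its weak closure is, and this is exactly where the weakness of the topology enters. As the paper notes, the conclusion of the lemma fails for infinite-dimensional Banach spaces equipped with the norm topology; the argument above exploits precisely the feature of weak topologies that a closed subspace with finite-dimensional annihilator in the defining dual is automatically open.
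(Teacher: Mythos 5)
There is a genuine gap at the final step, and it is not reparable by the argument as written. You reduce to showing that $H=\overline{A(X_0)}$ is weakly open, and after computing $\dim H^\perp=m\leq n$ you conclude that $H=\bigcap_{j=1}^m\ker f_j$ is ``a weakly open subspace of $Y$.'' But a proper linear subspace of a topological vector space is \emph{never} open, in any linear topology: if $H$ were open and $y\notin H$, then $ty\in H$ for small $t>0$ (because $0\in H$ and $H$ is open), whence $y=t^{-1}(ty)\in H$, a contradiction. In the weak topology a subspace of finite codimension is closed, but it still has empty interior. So your conclusion holds only when $m=0$, i.e.\ when $H=Y$.

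The case $m\geq1$ does occur and cannot be dismissed. Tracing your annihilator computation, $f\in H^\perp$ iff $A^*f\in\spann(g_1,\dots,g_n)$; since $A^*$ is injective, $\dim H^\perp=\dim\bigl(A^*(Y^*)\cap\spann(g_1,\dots,g_n)\bigr)$. This is nonzero exactly when one of the defining functionals $g_i$ lies in $A^*(Y^*)$ (for instance if $g_1=A^*h$ for some $h\in Y^*$), and nothing prevents that. In this situation $A(X_0)$ is not dense in $Y$ and $A(x_0)+H$ has empty interior, so your chain of inclusions produces no open set. The paper's proof handles precisely this obstruction by choosing its basic $0$-neighbourhood $U$ to be cut out by two kinds of functionals --- those of the form $A^*g_j$, which transfer to an open slab $W$ in $Y$, and functionals $f_k$ whose cosets modulo $A^*(Y^*)$ are linearly independent, which define a set $V$ proved dense by a Hahn--Banach separation; then $A(U)=W\cap V$, an open set intersected with a dense set, is somewhere dense. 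The point is that one should not expect $A(U)$ to \emph{contain} an open set, only to be dense in one, and your argument is looking for the stronger (and false) containment.
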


\begin{proof} Since $A$ is continuous, it is enough to show that
$A(U)$ is somewhere dense in $Y$ for every non-empty open subset $U$
of $X$. Since $A$ is linear and translation maps on a topological
vector space are homeomorphisms, it suffices to verify that $A(U)$
is somewhere dense in $Y$ for every neighborhood $U$ of $0$ in $X$.
It is easy to see that the sets of the shape
$$
U=\{x\in X:|A^*g_j(x)|<1,\ |f_k(x)|<1\ \ \text{for}\ 1\leq j\leq m,\
1\leq k\leq n\}
$$
form a basis of neighborhood of $0$ in $X$, where $g_1,\dots,g_m$
are linearly independent functionals in $Y^*$ and $f_1,\dots,f_n\in
X^*$ are such that $f_k+A^*(Y^*)$ are linearly independent in
$X^*/A^*(Y^*)$. Note that $A^*$ is injective since $A$ has dense
range and therefore the functionals $A^*g_j$ are also linearly
independent. Thus it suffices to show that $A(U)$ is somewhere dense
in $Y$ for $U$ defined in the above display. Clearly,
$$
\begin{array}{l}
A(U)=W\cap V,\ \text{where}\ W=\{y\in Y:|g_j(y)|<1\ \text{for}\
1\leq j\leq m\}\\
\qquad\qquad\qquad\qquad\text{and}\ \ V=\{Ax:|f_k(x)|<1\ \text{for}\
1\leq k\leq n\}. \
\end{array}
$$
Since $W$ is a non-empty open subset of $X$, the job will be done if
we verify that $V$ is dense in $Y$. Assume the contrary. Since $V$
is convex and balanced, the Hahn--Banach theorem implies that there
is a non-zero $f\in Y^*$ such that $|f(y)|<1$ for each $y\in V$.
That is, $|f(Ax)|=|A^*f(x)|<1$ whenever $|f_k(x)|<1$ for $1\leq
k\leq n$. It follows that $A^*f$ is a linear combination of $f_k$.
Since $A^*$ is injective, $A^*f\neq 0$ and therefore a non-trivial
linear combination of $f_k$ belongs to $A^*(Y^*)$. We have arrived
to a contradiction, which completes the proof.
\end{proof}

Recall that a subset $B$ of a vector space $X$ is called {\it
balanced} if $\lambda x\in B$ for every $x\in B$ and $\lambda\in\K$
such that $|\lambda|\leq 1$.

\begin{lemma}\label{tv1} Let $K$ be a compact subset of an infinite
dimensional topological vector space and $X$ such that $0\notin K$.
Then $\Lambda=\{\lambda x:\lambda\in\K,\ x\in K\}$ is a closed
nowhere dense subset of $X$.
\end{lemma}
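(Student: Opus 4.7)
The lemma has two parts: that $\Lambda$ is closed and that $\Lambda$ is nowhere dense (equivalently, since it is closed, that it has empty interior). I would handle the closedness first. Given $y$ in the closure of $\Lambda$, pick a net $(\lambda_\alpha x_\alpha)$ in $\Lambda$ converging to $y$, with $\lambda_\alpha\in\K$ and $x_\alpha\in K$. Compactness of $K$ lets me pass to a subnet along which $x_\alpha\to x\in K$. The decisive step is a dichotomy on $\{\lambda_\alpha\}$: if some subnet is bounded, extracting further yields $\lambda_\alpha\to\lambda\in\K$ and then $y=\lambda x\in\Lambda$; otherwise $|\lambda_\alpha|\to\infty$, so $x_\alpha=\lambda_\alpha^{-1}(\lambda_\alpha x_\alpha)\to 0\cdot y=0$ by continuity of scalar multiplication, which combined with $x_\alpha\to x$ and Hausdorffness of $X$ forces $x=0$, contradicting $0\notin K$.

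For the nowhere-density part I would argue by contradiction, assuming the interior $U$ of $\Lambda$ is non-empty. Since $\Lambda=\lambda\Lambda$ for every $\lambda\in\K\setminus\{0\}$ and each such $\lambda$ acts on $X$ as a homeomorphism, $U$ is itself invariant under non-zero scalar multiplication; in particular $-U=U$, and $U-U$ is an open neighborhood of $0$ contained in $\Lambda-\Lambda=\K K+\K K$ (using $-\K K=\K K$). The set $\K K+\K K$ is again invariant under non-zero scalar multiplication and therefore a cone through the origin; any such cone containing a neighborhood of $0$ must coincide with $X$, because for any $x\in X$ one can pick $t\in\K\setminus\{0\}$ with $|t|$ small enough that $tx$ lies in the neighborhood and then write $x=t^{-1}(tx)$. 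Thus the assumption forces $X=\K K+\K K$.

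It then remains to rule out $X=\K K+\K K$. I would write $X=\bigcup_{n\geq 1}C_n$ with $C_n=\{\lambda_1k_1+\lambda_2k_2:\lambda_1,\lambda_2\in\K,\ |\lambda_i|\leq n,\ k_1,k_2\in K\}$, each $C_n$ being the continuous image of the compact set $\{\lambda\in\K:|\lambda|\leq n\}^2\times K^2$ and therefore compact in $X$. In an infinite-dimensional Hausdorff topological vector space every compact set has empty interior, for otherwise $0$ would have a relatively compact neighborhood and Riesz's theorem would force $X$ to be finite-dimensional. Each $C_n$ is thus closed with empty interior, and the Baire category theorem applied to $X$ produces the contradiction. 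The main obstacle is precisely this last step, since it relies on $X$ being a Baire space, a property which is automatic in the norm topology of Banach or Fr\'echet spaces but which must be handled more delicately in the weak topologies featured in the paper's applications; in the latter setting one would either verify the Baire property in the particular ambient space or replace the Baire appeal with a finer structural argument that rules out $X=\K K+\K K$ directly.
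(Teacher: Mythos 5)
Your closedness argument via nets is sound (the paper simply declares it a straightforward exercise), and the reduction of nowhere-density to showing $X\neq\K K+\K K$ is correct as far as it goes. But the final step is a genuine gap, and you have identified it yourself: you write $X=\bigcup_n C_n$ with each $C_n$ compact, observe each $C_n$ has empty interior by Riesz, and then invoke Baire. A general topological vector space need not be a Baire space, and this is not a peripheral concern here --- the lemma is applied in this paper precisely to infinite-dimensional spaces with the weak topology, which are \emph{never} Baire (writing $X=\bigcup_n n B_X$ with $B_X$ the norm ball, each $nB_X$ is weakly closed with empty weak interior). So as written the proof does not cover the cases the paper actually needs, and the sentence at the end acknowledging this does not resolve it.

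The paper's own argument sidesteps Baire entirely and is worth internalizing, because it uses the same ingredient (Riesz: a non-empty open set with compact closure forces finite dimension) but reaches it directly rather than through a countable union. Let $L$ be the non-empty interior of $\Lambda$. Since $K$ is compact and $0\notin K$, choose a balanced open neighborhood $U$ of $0$ with $U\cap K=\varnothing$. Since $L$ is invariant under multiplication by non-zero scalars, $W=L\cap U$ is a non-empty open set. Now any $x\in W\subseteq\Lambda$ is $x=\lambda y$ with $y\in K$; if $|\lambda|>1$ then $y=\lambda^{-1}x\in U$ because $U$ is balanced and $x\in U$, contradicting $U\cap K=\varnothing$. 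Hence $W\subseteq Q:=\{\lambda y:|\lambda|\leq 1,\ y\in K\}$, which is compact, so $W$ is a non-empty open set with compact closure and Riesz gives the contradiction. If you want to repair your own proof rather than adopt this one, you would have to replace the Baire appeal with exactly this kind of direct compactness argument; the countable-union route cannot be salvaged in the generality the lemma claims.
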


\begin{proof} Closeness of $\Lambda$ in $X$ is a straightforward
exercise. Assume that $\Lambda$ is not nowhere dense. Since
$\Lambda$ is closed, its interior $L$ is non-empty. Since $K$ is
closed and $0\notin K$, we can find a non-empty balanced open set
$U$ such that $U\cap K=\varnothing$. Clearly $\lambda x\in L$
whenever $x\in L$ and $\lambda\in\K$, $\lambda\neq 0$. Since $U$ is
open and balanced the latter property of $L$ implies that the open
set $W=L\cap U$ is non-empty. Taking into account the definition of
$\Lambda$, the inclusion $L\subseteq \Lambda$, the equality $U\cap
K=\varnothing$ and the fact that $U$ is balanced, we see that every
$x\in W$ can be written as $x=\lambda y$, where $y\in K$ and
$\lambda\in{\mathbb D}=\{z\in \K:|z|\leq 1\}$. Since both $K$ and
${\mathbb D}$ are compact, $Q=\{\lambda y:\lambda\in {\mathbb D},\
y\in K\}$ is a compact subset of $X$. Since $W\subseteq Q$, $W$ is a
non-empty open set with compact closure. Since such a set exists
\cite{shifer} only if $X$ is finite dimensional, the proof is
complete.
\end{proof}

Now we can prove Proposition~\ref{sd}. Its proof resembles the proof
of the main result in \cite{72} and gives an idea of the proof of
Theorem~\ref{mai1} in the following sections. For $f\in L^1[0,1]$,
we say that the {\it infimum of the support of $f$ is $0$} if for
every $\epsilon>0$, $f$ does not vanish (almost everywhere) on
$[0,\epsilon]$.

\begin{lemma}\label{trco} Let $f,g\in L^1[0,1]$ be such that the
infima of the supports of $f$ and $g$ are $0$. Then there exist
truncated convolution operators $C,B\in L^1[0,1]$ injective and with
dense range such that $Cf=Bg$.
\end{lemma}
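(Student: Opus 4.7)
The plan is to take $C$ and $B$ to be truncated convolutions with $g$ and $f$ themselves. Concretely, let $\mu_f,\mu_g\in\M$ be the absolutely continuous measures on $[0,1)$ with densities $f$ and $g$, and set $B=C_{\mu_f}$, $C=C_{\mu_g}$. Then $Cf$ and $Bg$ are both the restriction to $[0,1]$ of the convolution $f*g=g*f$, so the identity $Cf=Bg$ holds automatically, with no freedom left to tune. The task thus reduces to the following self-contained claim: if $h\in L^1[0,1]$ has infimum of support equal to $0$, then $C_{\mu_h}:L^1[0,1]\to L^1[0,1]$ is injective and has dense range. Applying this claim to $h=f$ and $h=g$ finishes the lemma.

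Injectivity follows immediately from the Titchmarsh convolution theorem. If $h*u=0$ a.e. on $[0,1]$ with $u\in L^1[0,1]$, then there exist $a,b\geq 0$ with $a+b\geq 1$ such that $h$ vanishes a.e. on $[0,a]$ and $u$ vanishes a.e. on $[0,b]$. The hypothesis on $h$ forces $a=0$, hence $b\geq 1$ and $u\equiv 0$.

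Dense range I would establish by duality. Any $\phi\in L^\infty[0,1]$ annihilating the range of $C_{\mu_h}$ satisfies $\int_0^1\phi(x)(h*u)(x)\,dx=0$ for every $u\in L^1[0,1]$. An application of Fubini rewrites the condition as $\int_0^{1-t}h(y)\phi(y+t)\,dy=0$ for a.e. $t\in[0,1]$. The substitution $s=1-t-y$ then converts this into $(h*\tilde\phi)(T)=0$ for a.e. $T\in[0,1]$, where $\tilde\phi(s)=\phi(1-s)$ and $T=1-t$. Since $L^\infty[0,1]\subset L^1[0,1]$, Titchmarsh applies once more and, combined with the infimum-of-support hypothesis on $h$, forces $\tilde\phi=0$, hence $\phi=0$.

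The only non-routine moment is the change of variables in the duality computation that turns the adjoint annihilator condition into a genuine convolution equation to which Titchmarsh applies; once that reformulation is in place, both injectivity and dense range rest on the same invocation of Titchmarsh driven by the assumption that the infimum of the support of $h$ is $0$. No difficulty arises from the choice $h\in\{f,g\}$, since the whole argument requires only $h\in L^1[0,1]$ with that support condition.
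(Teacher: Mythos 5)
Your proposal is correct and follows essentially the same route as the paper: take $C$ and $B$ to be truncated convolutions with densities $g$ and $f$, note both $Cf$ and $Bg$ are the restriction of $f*g$, and invoke the Titchmarsh convolution theorem for injectivity and dense range. The paper simply states that Titchmarsh gives injectivity of $C_\mu$, $C_\nu$, and their adjoints; your explicit duality/change-of-variables computation for the dense-range part spells out what the paper leaves implicit, but it is the same argument.
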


\begin{proof} Let $\mu$ and $\nu$ be the absolutely continuous
measures on $[0,1]$ with the densities $g$ and $f$ respectively.
Applying the  Titchmarsh theorem on the supports of convolutions to
$\mu*\nu$, we see that $C_\mu$, $C_\nu$ and their duals are
injective. Thus $C_\mu$ and $C_\nu$ are both injective and have
dense ranges. Next, $C_\mu f$ and $C_\nu g$ both equal to the
restriction to $[0,1]$ of the convolution $f*g$. Thus $C_\mu f=C_\nu
g$ and therefore $C=C_\mu$ and $B=C_\nu$ satisfy all required
conditions.
\end{proof}

\begin{proof}[Proof of Proposition~\ref{sd}] Let $f\in L^1[0,1]$. If
$f$ vanishes (almost everywhere) on $[0,\epsilon]$ for some
$\epsilon\in(0,1)$, then each $V^zf$ belongs to the space $L$ of
$g\in L^1[a,b]$ vanishing on $[0,\epsilon]$. Since $L$, being a
proper closed linear subspace of $L^1[0,1]$ is nowhere dense (in the
weak topology), the result follows. It remains to consider the case
when the infimum of the support of $f$ is $0$. Consider the
multiplication operator $M\in L(L^1[0,1])$, $Mh(x)=xh(x)$. It is
straightforward to verify that
\begin{equation}\label{vzv}
V^zM-MV^z=-zV^{z+1}\ \ \text{for every $z\in\Pi$}.
\end{equation}
Clearly, the infimum of the support of $Mf$ is also $0$. By
Lemma~\ref{trco}, there exist truncated convolution operators
$B,C\in L(L^1[0,1])$ injective and with dense range such that
$CMf=Bf$. Assume the contrary. That is, the set
$\Omega=\{wV^zf:z\in\Pi,\ w\in\C\}$ is somewhere dense in $L^1[0,1]$
carrying the weak topology. By Lemma~\ref{weak},
$V(\Omega)=\{wV^{z+1}f:z\in\Pi,\ w\in\C\}$ is also somewhere dense
in $L^1[0,1]$ with weak topology. Applying (\ref{vzv}) with $z$
replaced by $z+1$ to $f$ and multiplying by $C$ from the left, we
get $CV^{z+1}Mf-CMV^{z+1}f=-(z+1)CV^{z+2}f$. Using the commutativity
of $\A$, we obtain $V^{z+1}CMf-CMV^{z+1}f=-(z+1)CVV^{z+1}f$. Since
$CMf=Bf$, we have
$$
V^{z+1}Bf-CMV^{z+1}f=-(z+1)CVV^{z+1}f.
$$
Using commutativity of $\A$ once again, we arrive to
\begin{equation}\label{brbrb}
(CM-B)V^{z+1}f=(z+1)(CV)V^{z+1}f\ \ \text{whenever $\re z>-1$}.
\end{equation}
Pick any non-zero $g\in L^1[0,1]$, which lies in the interior of the
closure of $\{wV^{z+1}f:z\in\Pi,\ w\in\C\}$ in the weak topology.
Since $CV$ is injective, $CVg\neq 0$ and we can pick $\phi\in
(L^1[0,1])^*=L^\infty[0,1]$ such that $\phi(CVg)=(CV)^*\phi (g)\neq
0$. Take $c>0$ such that $|(CM-B)^*\phi (g)|<c|(CV)^*\phi (g)|$ and
consider the weakly open set
$$
W=\{h\in L^1[0,1]:|(CM-B)^*\phi (h)|<c|(CV)^*\phi (h)|\}.
$$
By Lemma~\ref{tv1}, the set $\{wV^{z+1}f:\re z\geq 0,\ |z|\leq c\}$
is nowhere dense in $L^1[0,1]$ with the weak topology. Since $g\in
W$, $g$ lies in the interior of the closure of
$\{wV^{z+1}f:z\in\Pi,\ w\in\C\}$ in the weak topology, we can find
$w\in\C\setminus\{0\}$ and $z\in \Pi$ such that $|z|>c$ and
$wV^{z+1}f\in W$. Using \ref{brbrb}, we have
$$
(CM-B)^*\phi(wV^{z+1}f)=(z+1)(CV)^*\phi(wV^{z+1}f).
$$
Since $wV^{z+1}f\in W$, we have
$$
|(CM-B)^*\phi(wV^{z+1}f)|<c|(CV)^*\phi(wV^{z+1}f)|.
$$
By the last two displays $|z|\leq |z+1|<c$ and we have arrived to a
contradiction.
\end{proof}

The proof of Theorem~\ref{mai1} goes along the same lines as the
proof of Proposition~\ref{sd}. However we need some extra
preparation. A {\it strongly continuous operator semigroup}
$\{T^{[t]}\}_{t\in G}$ on a topological vector space $X$ is a
collection of continuous linear operators $T^{[t]}$ on $X$ labelled
by the elements of an additive subsemigroup $G$ of $\R^n$ containing
$0$ and such that $T^{[0]}=I$, $T^{[t+s]}=T^{[t]}T^{[s]}$ for any
$t,s\in G$ and the map $t\mapsto T^{[t]}x$  from $G$ to $X$ is
continuous for each $x\in X$, where $G$ carries the topology
inherited from $\R^n$. If $n=k+m$ and $G=\R_+^k\times \Z_+^m$, then
for the sake of brevity, we shall call a strongly continuous
operator semigroup $\{T^{[t]}\}_{t\in G}$, an {operator
$(k,m)$-semigroup} on $X$. In this case we will often write $T_j$
with $1\leq j\leq n$ instead of $T^{[e_j]}$, where $e_j$ is the
$j^{\rm th}$ basic vector in $\R^n$ and we shall write $T_j^s$
instead of $T^{[se_j]}$. In this notation $T^{[t]}=T_1^{t_1}\dots
T_n^{t_n}$.

\begin{lemma}\label{gene01} Let $X$ be an infinite dimensional
topological vector space, $x\in X$, $c>0$ and $\{T^{[t]}\}_{t\in
\R_+^k\times \Z_+^m}$ be an operator $(k,m)$-semigroup on $X$. Then
the set
$$
\Omega_c=\{wT^{[t]}x:w\in\K,\ t_j\leq c\ \ \text{for}\ \ 1\leq j\leq
n\}
$$
is nowhere dense in $X$.
\end{lemma}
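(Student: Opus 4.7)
My plan is to realize $\Omega_c$ as a set of the form $\K\cdot K$ with $K\subset X$ compact and then apply Lemma~\ref{tv1}.

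First, I would set $K_0=[0,c]^k\times\{0,1,\dots,\lfloor c\rfloor\}^m$, a compact subset of the parameter semigroup $G=\R_+^k\times\Z_+^m$. By strong continuity of $\{T^{[t]}\}_{t\in G}$, the orbit map $\phi\colon G\to X$ defined by $\phi(t)=T^{[t]}x$ is continuous, so $K:=\phi(K_0)$ is compact in $X$. Unwinding the definition of $\Omega_c$ gives
\[
\Omega_c=\K\cdot K=\{wy:w\in\K,\ y\in K\}.
\]

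In the principal case $0\notin K$ (equivalently, $T^{[t]}x\ne 0$ for all $t\in K_0$), Lemma~\ref{tv1} applies directly and tells us that $\K\cdot K$ is a closed nowhere dense subset of the infinite dimensional $X$, which is the conclusion.

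In the degenerate case $0\in K$, more work is needed. If $x=0$, then $\Omega_c=\{0\}$ is trivially nowhere dense. Otherwise, pick $t^*\in K_0$ with $T^{[t^*]}x=0$; the semigroup law forces $T^{[t]}x=0$ for every $t\in t^*+G$, and so the zero set $E:=\phi^{-1}(0)\cap K_0$ is a closed subset of $K_0$ not containing the origin of $G$. Setting $d:=\mathrm{dist}(0,E)>0$, the compact set $K_0':=\{t\in K_0:|t|\leq d/2\}$ is disjoint from $E$, so $\phi(K_0')$ is compact with $0\notin\phi(K_0')$, and Lemma~\ref{tv1} makes $\K\cdot\phi(K_0')$ closed nowhere dense. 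To cover the rest of $\Omega_c$, I would use compactness of $K_0$ together with continuity of $\phi$: at each $s\in K_0\setminus E$ one finds a compact neighborhood $V_s\subset K_0\setminus E$ with $0\notin\phi(V_s)$, and at each $s\in E\cap(K_0\setminus K_0')$ the single point $\phi(s)=0$ contributes only $\{0\}$, already contained in $\K\cdot\phi(K_0')$. A finite subcover of $K_0\setminus K_0'$ then writes $\Omega_c$ as a finite union of closed nowhere dense sets, each of the form $\K\cdot L_i$ with $L_i$ compact and $0\notin L_i$, and so $\Omega_c$ itself is closed nowhere dense.

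The main obstacle is precisely the degenerate case: when $\phi$ vanishes somewhere in $K_0$, the compact image $K=\phi(K_0)$ contains the origin, and $\K\cdot K$ need not be closed---nearby lines $\K\phi(t)$ with $\phi(t)\to 0$ can accumulate onto additional limiting lines. The finite-cover argument above is designed to sidestep this pathology by partitioning $K_0$ into pieces on which $\phi$ is bounded away from $0$ (so that Lemma~\ref{tv1} applies) together with pieces where $\phi$ vanishes identically (contributing only $\{0\}$); the semigroup identity and the strong continuity of $\{T^{[t]}\}_{t\in G}$ are both used precisely at this step to locate the needed neighborhoods $V_s$ and to control $E$.
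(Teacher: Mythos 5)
Your principal case ($0\notin K$) is correct and matches the paper's strategy in spirit: realize $\Omega_c$ as $\K\cdot\phi(K_0)$ with $\phi(K_0)$ compact and apply Lemma~\ref{tv1}. The degenerate case, however, has a genuine gap. You set $E=\phi^{-1}(0)\cap K_0$, carve out a compact $K_0'$ disjoint from $E$, and apply Lemma~\ref{tv1} to $\K\cdot\phi(K_0')$ --- fine so far. But the proposed ``finite subcover'' does not exist. Each $V_s$ with $s\in K_0\setminus E$ is a compact subset of $K_0\setminus E$ and hence is bounded away from the nonempty closed set $E$, while singletons of $E$ are not open. Since $E\subseteq K_0\setminus K_0'$ and (because $E\neq\varnothing$, $E\neq K_0$) the boundary $\partial E$ is nonempty, points of $K_0\setminus E$ arbitrarily close to $E$ are not captured by any finite union of the $V_s$ together with points of $E$. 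More fundamentally, $\Omega_c$ simply cannot, in general, be written as $\{0\}$ together with a finite union $\bigcup_i\K\cdot L_i$ with each $L_i$ compact and $0\notin L_i$: the lines $\K\phi(t)$ become small but nonzero as $t$ approaches $\partial E$, and nothing in the purely topological picture controls what they accumulate onto. Your own final paragraph correctly identifies this ``nearby lines accumulating'' phenomenon as the obstacle, but the covering you propose does not sidestep it.

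The paper closes this gap with an algebraic ingredient that has no analogue in your argument. It first reduces to $x$ cyclic for the semigroup (else $\Omega_c$ lies in a proper closed subspace) and reorders the generators so that $T_j(X)$ is dense iff $j\geq l$. Claims~1 and~2 then show that $T^{[t]}x=0$ iff $T_1^{t_1}\cdots T_{l-1}^{t_{l-1}}=0$, i.e.\ the zero set of $\phi$ is governed entirely by the generators with non-dense range. Choosing $\epsilon\in(0,c)$ with $T_1^\epsilon\cdots T_{l-1}^\epsilon x\neq 0$, the compact box $\{t\in K_0:\,t_j\leq\epsilon\ \text{for}\ j<l\}$ avoids $E$ (giving a closed nowhere dense $\K\cdot\phi$-image by Lemma~\ref{tv1}), while the remaining part of $\Omega_c$ is contained in $\bigcup_{j<l}T_j^\epsilon(X)$ --- a finite union of linear subspaces with non-dense closures, hence nowhere dense. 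It is this identification of the ``bad'' part of the orbit with non-dense operator ranges, not a refinement of the covering, that makes the proof go through; to complete your argument you would need to import precisely this step.
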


\begin{proof} First, observe that the general case is easily reduced
to the case $m=0$. Indeed, it follows from the fact that the union
of finitely many nowhere dense sets is nowhere dense. Thus we can
assume that $m=0$. If $x$ is not a cyclic vector for
$\{T^{[t]}\}_{t\in \R_+^k}$, $\Omega_c$ is contained in a proper
closed linear subspace of $X$ and therefore is nowhere dense. Thus
we can assume that $x$ is cyclic for $\{T^{[t]}\}_{t\in \R_+^k}$.
Without loss of generality, we can also assume that there is
$l\in\{1,\dots,k\}$ such that $T_j(X)$ is dense in $X$ if $j\geq l$
and $T_j(X)$ is not dense in $X$ if $j<l$.

\medskip

{\bf Claim 1.} \ For every $s\in\R_+^k$, $T^{[s]}x=0$ if and only if
$T^{[s]}=0$.

\begin{proof} Assume the contrary. Then there is $s\in\R_+^k$ such
that $T^{[s]}\neq 0$ and $T^{[s]}x=0$. Then $x\in L=\ker T^{[s]}\neq
X$. Since the linear space $L$ is invariant for every $T^{[t]}$ and
contains $x$, the linear span of the orbit of $x$ with respect to
$\{T^{[t]}\}_{t\in \R_+^k}$ is contained $L$. Since the latter is a
proper closed linear subspace of $X$, we have arrived to a
contradiction with the cyclicity of $x$ for $\{T^{[t]}\}_{t\in
\R_+^k}$.
\end{proof}

\medskip

{\bf Claim 2.} \ For every $s\in\R_+^k$, $T^{[s]}=0$ if and only if
$T_1^{s_1}\dots T_{l-1}^{s_{l-1}}=0$ (if $l=1$, we have the empty
product, which is always assumed to be $I$).

\begin{proof} Since $T_j(X)$ is dense in $X$ for $j\geq l$, $B(X)$ is dense in $X$, where
$B=T_l^{s_l}\dots T_k^{s_k}$. Since $T^{[s]}=AB$ with
$A=T_1^{s_1}\dots T_{l-1}^{s_{l-1}}$ and $AB=BA$, the density of the
range of $B$ implies that $T^{[s]}=0$ if and only if $A=0$.
\end{proof}

Since $x\neq 0$ and $\{T^{[t]}\}_{t\in \R_+^k}$ is strongly
continuous, we can pick $\epsilon\in (0,c)$ such that
$T_1^{\epsilon}\dots T_{l-1}^{\epsilon}x\neq 0$. By Claims~1 and~2,
$T^{[t]}x\neq 0$ whenever $t_j\leq \epsilon$ for $j<l$. Thus the
compact set
$$
K=\{T^{[t]}x:t_j\leq \epsilon\ \text{if}\ j<l\ \text{and}\ t_j\leq
c\ \text{if}\ j\geq l\}
$$
does not contain $0$. By Lemma~\ref{tv1},
$$
\Omega=\{wT^{[t]}x:w\in\K,\ t_j\leq \epsilon\ \text{if}\ j<l\
\text{and}\ t_j\leq c\ \text{if}\ j\geq l\}
$$
is closed and nowhere dense in $X$. On the other hand,
$$
\Omega_c\setminus\Omega\subseteq \bigcup_{j<l}T_j^\epsilon(X)
$$
and therefore $\Omega_c\setminus\Omega$ is nowhere dense in $X$
since $\overline{T_j^{\epsilon}(X)}\neq X$ for $j<l$. Hence
$\Omega_c$ is nowhere dense as the union of the nowhere dense sets
$\Omega$ and $\Omega_c\setminus\Omega$.
\end{proof}

\noindent {\bf Remark.} \ In the above proof we have repeatedly used
the elementary fact that if $\{T^t\}_{t\geq 0}$ is a strongly
continuous operator semigroup then $T^t$ for $t>0$ either all have
dense ranges or all have non-dense ranges.

\medskip

\begin{lemma}\label{ml} Let $X$ be an infinite dimensional
topological vector space carrying a weak topology, $\B$ be a
commutative subalgebra of $L(X)$, $x\in X$,
$\{T^{[t]}\}_{t\in\R_+^k\times \Z_+^m}$ be an operator
$(k,m)$-semigroup on $X$ such that each $T^{[t]}$ has dense range
and belongs to $\B$, $M\in L(X)$, $B,C\in \B$, $[T_j,M]=S_j\in\B$
for $1\leq j\leq n=k+m$, $CMx=Bx$, $C(X)$ is dense in $X$ and the
convex span of the operators $R_1,\dots,R_n$ does not contain the
zero operator, where
\begin{equation}\label{rj}
R_j=T_1\dots T_{j-1}S_jT_{j+1}\dots T_n.
\end{equation}
Then $O=\{wT^{[t]}x:w\in\K,\ t\in\R_+^k\times\Z_+^m\}$ is nowhere
dense in $X$.
\end{lemma}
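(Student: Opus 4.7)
The plan is to imitate the proof of Proposition~\ref{sd}, with the scalar $z+1$ replaced by the vector of coefficients $(t_j+1)_{j=1}^n$ and the vector $V^{z+1}f$ replaced by $T^{[t+\mathbf{1}]}x$, where $\mathbf{1}=e_1+\cdots+e_n$. The first step is to establish the multi-parameter commutator identity
\[
[T^{[s]},M]=\sum_{j=1}^n s_j\,T^{[s-e_j]}S_j, \qquad s\in \R_+^k\times\Z_+^m \text{ with every }s_j\geq 1,
\]
by induction on the discrete coordinates (using $S_j\in\B$ which commutes with every $T_i$) combined with a strong-continuity extension in the continuous coordinates paralleling the computation underlying~\eqref{vzv}. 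Applying this with $s=t+\mathbf{1}$, multiplying by $C$ from the left, using the commutativity of $\B$, the relation $T^{[\mathbf{1}-e_j]}S_j=R_j$, and the hypothesis $CMx=Bx$, yields the key identity
\begin{equation}\label{keyidmul}
(B-CM)\,T^{[t+\mathbf{1}]}x=\sum_{j=1}^n(t_j+1)\,CR_j\,T^{[t]}x, \qquad t\in\R_+^k\times\Z_+^m.
\end{equation}

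Suppose for a contradiction that $O$ is somewhere dense. Since $\B x$ is a $\K$-linear subspace of $X$ containing $O$, its weak closure has non-empty weak interior; but any weakly closed linear subspace of $X$ with non-empty weak interior equals $X$, so $\overline{\B x}=X$ and $x$ is $\B$-cyclic. Commutativity of $\B$ and density of $C(X)$ together propagate the hypothesis $0\notin\mathrm{conv}\{R_j\}$ to $0\notin\mathrm{conv}\{CR_j\}$ in $L(X)$: any $\sum_j\lambda_jR_j\in\B$ satisfying $C\sum_j\lambda_jR_j=0$ commutes with $C$ and vanishes on the dense set $C(X)$, hence equals $0$. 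Setting $\Psi(\lambda):=\sum_j\lambda_jCR_j\in\B$, each $\ker\Psi(\lambda)$ is then a proper closed $\B$-invariant subspace, which cannot contain the $\B$-cyclic vector $x$; so $\Psi(\lambda)x\neq 0$ for every $\lambda$ in the probability simplex, i.e.\ $0\notin\mathrm{conv}\{CR_jx\}$ in $X$. Since each $T^{[t]}$ has dense range, the same reasoning applied to $wT^{[t]}x$ (which is $\B$-cyclic by commutativity of $\B$ and dense range of $T^{[t]}$) shows that every non-zero $u\in O$ satisfies $0\notin\mathrm{conv}\{CR_ju\}$ in $X$.

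By Lemma~\ref{weak} applied to $T^{[\mathbf{1}]}$, the set $T^{[\mathbf{1}]}(O)\subseteq O$ is somewhere dense, so I can choose a non-empty weakly open $W_0\subseteq\overline{T^{[\mathbf{1}]}(O)}\subseteq\overline O$ and then a non-zero $u_0\in W_0\cap T^{[\mathbf{1}]}(O)$. Finite-dimensional Hahn--Banach separation of $0$ from $\mathrm{conv}\{CR_ju_0\}$ in $X$ produces $\phi\in X^*$ with $\re\phi(CR_ju_0)>0$ for every $j$. Shrinking $W_0$, let $W\subseteq W_0$ be a weakly open neighborhood of $u_0$ on which $\re\phi(CR_ju)>\epsilon$ for every $j$ and $|\phi((B-CM)T^{[\mathbf{1}]}u)|<D$, for some constants $\epsilon,D>0$. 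For any $u=wT^{[t]}x\in W\cap O$, applying $\phi$ to \eqref{keyidmul} scaled by $w$ and taking real parts gives
\[
D>\re\phi\bigl((B-CM)T^{[\mathbf{1}]}u\bigr)=\sum_{j=1}^n(t_j+1)\re\phi(CR_ju)\geq \epsilon\sum_{j=1}^n(t_j+1),
\]
whence $t_j\leq D/\epsilon-1=:c$ for every $j$. Thus $W\cap O\subseteq\Omega_c$, which is nowhere dense by Lemma~\ref{gene01}; yet $W\subseteq\overline O$ forces $W\cap O$ to be weakly dense in $W$, a contradiction.

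The subtlety I expect to be the main obstacle is producing a single pair $(\phi,u_0)$ with $u_0\in W_0$ and $\re\phi(CR_ju_0)>0$ simultaneously for all $j$; this is resolved by combining the $\B$-invariance of each $\ker\Psi(\lambda)$ with the $\B$-cyclicity of every non-zero vector in $O$, which together ensure $0\notin\mathrm{conv}\{CR_ju\}$ for a weakly dense set of candidate $u_0$'s inside $W_0$. A secondary technicality is the continuous-parameter case of the commutator identity, where the integer-parameter induction must be upgraded via strong continuity of the semigroup.
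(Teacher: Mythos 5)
Your proposal is correct and follows essentially the same route as the paper: the same multi-parameter commutator identity, the same Hahn--Banach separation of $0$ from the convex hull $\{CR_ju_0\}$, and the same final appeal to Lemma~\ref{gene01} to produce the contradiction. The only cosmetic differences are that you establish $\B$-cyclicity of the orbit vectors upfront and use it to rule out $0\in\mathrm{conv}\{CR_ju_0\}$ (the paper embeds the equivalent argument---that $CT^{[s]}R_{[\lambda]}$ would have a proper closed $\B$-invariant kernel containing $x$---inside the main contradiction), and that you pass to $T^{[\mathbf{1}]}(O)$ when picking $u_0$, which is a harmless but unnecessary extra step.
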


\begin{proof} Observe that
$$
\text{$[AB,M]=B[A,M]+A[B,M]$ if $A,B,[A,M],[B,M]\in\B$.}
$$
It follows that $[T_j^r,M]=rT_j^{r-1}S_j$ whenever $r\in\N$ and
$j>k$. Similarly, $[T_j^r,M]=rT_j^{r-1}S_j$ whenever $r\geq 1$ is
rational and $j\leq k$. By strong continuity,
$[T_j^r,M]=rT_j^{r-1}S_j$ whenever $r\geq 1$ is real and $j\leq k$.
Applying the above display once again, we arrive to
$$
[T^{[t+{\bf 1}]},M]=((t_1+1)R_1+{\dots}+(t_n+1)R_n)T^{[t]}\ \
\text{for every}\ \ t\in\R_+^k\times \Z_+^m.
$$
where $R_j$ are defined in (\ref{rj}) and ${\bf 1}=(1,\dots,1)$. For
$t\in\R_+^k\times \Z_+^m$, let $N(t)=n+t_1+{\dots}+t_n$ and
$\lambda(t)=\bigl(\frac{t_1+1}{N(t)},\dots,\frac{t_n+1}{N(t)}\bigr)\in\R^n$
and for $\lambda\in\R^n$ let
$R_{[\lambda]}=\lambda_1R_1+{\dots}+\lambda_n R_n$. Then for every
$t\in\R_+^k\times\Z_+^m$, $R_{[\lambda(t)]}$ is a convex combination
of $R_j$. In this notation, the above display can be rewritten as
$$
[T^{[t+{\bf 1}]},M]=N(t)R_{[\lambda(t)]}T^{[t]}.
$$
Multiplying the equality in the above display by $C$ from the left
and applying the result to $x$, we obtain $CT^{[t+{\bf
1}]}Mx-CMT^{[t+{\bf 1}]}x=N(t)CR_{[\lambda(t)]}T^{[t]}x$. Since $C$
commutes with each $T^{[s]}$, we get
$$
T^{[t+{\bf 1}]}CMx-CMT^{[t+{\bf 1}]}x=N(t)CR_{[\lambda(t)]}T^{[t]}x.
$$
Since $CMx=Bx$ and $B$ commutes with each $T^{[s]}$, we arrive to
\begin{equation}\label{dtr}
DT^{[t]}x=N(t)CR_{[\lambda(t)]}T^{[t]}x\ \ \text{for each
$t\in\R_+^k\times\Z_+^m$, where $D=(B-CM)T^{[{\bf 1}]}$.}
\end{equation}

Assume the contrary. That is, the interior $W$ of the closure of $O$
in $X$ is non-empty. From the definitions of $O$ and $W$ it follows
that there is $s\in \R_+^k\times \Z_+^m$ such that $T^{[s]}x\in W$.
Next, we observe that the convex span $K$ of the vectors
$CT^{[s]}R_1x,\dots,CT^{[s]}R_1x$ does not contain $0$. Indeed,
assume that it is not the case. Then there are
$\lambda_1,\dots,\lambda_n\in \R_+$ such that
$\lambda_1+{\dots}+\lambda_n=1$ and $CT^{[s]}R_{[\lambda]}x=0$.
Since $0$ is not in the convex span of $R_j$, $R_{[\lambda]}\neq 0$.
Since $C$ and $T^{[s]}$ have dense ranges and commute with
$R_{[\lambda]}$, $A=CT^{[s]}R_{[\lambda]}\neq 0$. Since $A$ commutes
with each $T^{[t]}$, $\ker A$ is invariant for each $T^{[t]}$. Since
$x\in \ker A$, we have $O\subseteq\ker A$ and therefore $O$ is
nowhere dense in $X$ because $\ker A$ is a proper closed subspace of
$X$. Thus $0$ does not belong to the convex compact set $K$. By the
Hahn--Banach theorem \cite{shifer}, there is $f\in X^*$ such that
$\re f(y)>1\ \ \text{for every}\ \ y\in K$. In particular,
$$
\re f(CT^{[s]}R_jx)=\re C^*R_j^*f(T^{[s]}x)>1\ \ \text{for}\ \ 1\leq
j\leq n.
$$
Let $c=|D^*f(T^{[s]}x)|+1$. Then the open set
$$
U=\{v\in X:|D^*f(v)|<c,\ \re R_j^*C^*f(v)>1\ \text{for}\ 1\leq j\leq
n\}
$$
contains $T^{[s]}x$ and therefore $U\cap W$ is non-empty. By
Lemma~\ref{gene01}, the set $O_c=\{wT^{[t]}x:w\in\K,\ N(t)\leq c\}$
is nowhere dense in $X$. Since $O$ is dense in $U\cap W$ and $O_c$
is nowhere dense, $O\setminus O_c$ intersects $U\cap W$. Thus we can
pick $z\in\K$ and $t\in \R_+^k\times \Z_+^m$ such that $N(t)>c$ and
$u=zT^{[t]}x\in U\cap W$. Applying $f$ to the both sides of
(\ref{dtr}), we obtain $D^*f(u)=N(t)R^*_{[\lambda(t)]}C^*f(u)$.
Hence
$$
N(t)\re R^*_{[\lambda(t)]}C^*f(u)=\re D^*f(u)\leq |D^*f(u)|.
$$
Since the real number $\re R^*_{[\lambda(t)]}C^*f(u)$ is in the
convex span of the numbers $\re R_j^*C^*f(u)$, each of which is in
$(1,\infty)$ (because $u\in U$), we have $\re
R^*_{[\lambda(t)]}C^*f(u)>1$. The inclusion $u\in U$ also implies
that $|D^*f(u)|<c$. Thus by the above display, $N(t)<c$, which is a
contradiction.
\end{proof}

In order to apply Lemma~\ref{ml} to prove Theorem~\ref{mai1}, we
need more information on truncated convolution operators.

\section{Elementary properties of truncated convolution operators}

Throughout this section, when speaking of $C_\mu$, we assume that it
acts on $C_0[0,1]$ or on $L^p[0,1]$ with $1\leq p<\infty$.

First, observe that $C_\mu=I$ precisely when $\mu=\delta$, where
$\delta$ is the point mass at $0$: $\delta(\{0\})=1$ and
$\delta(A)=0$ if $0\notin A$. As we have already mentioned, the
Titchmarsh theorem on supports of convolutions implies that $C_\mu$
and $C_\mu^*$ are injective if $\inf\supp(\mu)=0$. Hence $C_\mu$ has
dense range if $\inf\supp(\mu)=0$. In the case $\inf\supp\mu=a>0$,
the same theorem ensures that $C_\mu$ is nilpotent with the order of
nilpotency being the first natural number $n$ for which $na\geq 1$.
If $\mu(\{0\})=0$, then $\mu$ is the variation norm limit of its
restrictions $\mu_n$ to $[2^{-n},1]$. Hence $C_\mu$ is the operator
norm limit of the sequence $C_{\mu_n}$ of nilpotent operators. Thus
$C_\mu$  is quasinilpotent if $\mu(\{0\})=0$. It immediately follows
that the spectrum $\sigma(C_\mu)$ is the singleton $\{\mu(\{0\})\}$
for each $\mu\in\M$. Recall that a power $T^n$ of an operator $T$ is
the identity $I$ if and only if $T$ is the direct sum of operators
of the shape $cI$ with $c^n=1$. In the case when the spectrum of $T$
is a singleton, this means that $T=cI$ with $c^n=1$. The above
observations are summarized in the following proposition.

\begin{proposition}\label{ele1} Let $\mu\in\M$. Then
\begin{itemize}
\itemsep-2pt
\item[{\rm (\ref{ele1}.1)}]$C_\mu$ is injective if and only if
$C_\mu$ has dense range if and only if $\inf\supp(\mu)=0;$
\item[{\rm (\ref{ele1}.2)}]$C_\mu^n=aI$ if and only if $C_\mu=cI$
$($equivalently, $\mu=c\delta)$ with $c^n=a;$
\item[{\rm (\ref{ele1}.3)}]$\sigma(C_\mu)=\{\mu(\{0\})\}$.
\end{itemize}
\end{proposition}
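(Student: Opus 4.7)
The plan is to derive the three items in the order (\ref{ele1}.1), (\ref{ele1}.3), (\ref{ele1}.2), with Titchmarsh's convolution theorem and the subadditivity of the spectral radius on commuting operators doing all the work.

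For (\ref{ele1}.1), if $\inf\supp(\mu)=0$, then Titchmarsh's theorem applied to $f*\mu$ forbids $C_\mu f=0$ for nonzero $f\in L^1[0,1]$, giving injectivity; running the same argument on the dual, which is again a truncated convolution against $\mu$ (explicitly $(C_\mu^*g)(u)=\int_0^{1-u}g(u+t)\,d\mu(t)$), yields injectivity of $C_\mu^*$ and hence dense range of $C_\mu$. Conversely, if $\inf\supp(\mu)=a>0$, iterated Titchmarsh places $\supp(\mu^{*n})$ in $[na,\infty)$, so for the smallest $n$ with $na\geq 1$ we get $C_\mu^n=0$. Nilpotency rules out injectivity, and the range of $C_\mu$ lives in the proper closed subspace of functions vanishing a.e.\ on $[0,a)$, so it is not dense either.

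For (\ref{ele1}.3), write $\mu=c\delta+\mu'$ with $c=\mu(\{0\})$ and $\mu'(\{0\})=0$, so $C_\mu=cI+C_{\mu'}$. Approximate $\mu'$ in variation norm by its restrictions $\mu'_k=\mu'|_{[2^{-k},1)}$; by (\ref{ele1}.1) each $C_{\mu'_k}$ is nilpotent, and the estimate $\|C_{\mu'}-C_{\mu'_k}\|\leq\|\mu'-\mu'_k\|\to 0$ combined with commutativity inside $\A$ and the subadditivity $r(A+B)\leq r(A)+r(B)$ for commuting $A,B$ yields
\[
r(C_{\mu'})\leq r(C_{\mu'_k})+\|C_{\mu'}-C_{\mu'_k}\|\to 0.
\]
Hence $C_{\mu'}$ is quasinilpotent and $\sigma(C_\mu)=c+\sigma(C_{\mu'})=\{c\}$.

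For (\ref{ele1}.2), the ``if'' direction is immediate. For the converse (in the non-degenerate case $a\neq 0$), suppose $C_\mu^n=aI$. Spectral mapping together with (\ref{ele1}.3) forces $c^n=a$ where $c=\mu(\{0\})$, so in particular $c\neq 0$. Expanding $(cI+C_{\mu'})^n=c^n I$ and cancelling the leading term gives $C_{\mu'}P(C_{\mu'})=0$ for a polynomial $P$ with $P(0)=nc^{n-1}\neq 0$; since $C_{\mu'}$ is quasinilpotent, $\sigma(P(C_{\mu'}))=\{P(0)\}$ is bounded away from $0$, so $P(C_{\mu'})$ is invertible and $C_{\mu'}=0$, i.e.\ $\mu=c\delta$. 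The main technical point to watch is the identification of $C_\mu^*$ as a truncated convolution operator amenable to Titchmarsh; everything else is routine bookkeeping with the decomposition $C_\mu=\mu(\{0\})I+\text{(quasinilpotent)}$.
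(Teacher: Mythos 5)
Your proof is correct, and for items (\ref{ele1}.1) and (\ref{ele1}.3) it follows the paper's route almost verbatim: Titchmarsh on $f*\mu$ and on the flipped adjoint gives injectivity and dense range when $\inf\supp(\mu)=0$, nilpotency when $\inf\supp(\mu)>0$, and then writing $C_\mu=\mu(\{0\})I+C_{\mu'}$ with $\mu'(\{0\})=0$ and approximating $\mu'$ by its restrictions $\mu'|_{[2^{-k},1)}$ gives quasinilpotence of $C_{\mu'}$ and the singleton spectrum. (You make explicit the commuting-subadditivity $r(A+B)\leq r(A)+r(B)$ step that the paper leaves tacit in the phrase ``operator norm limit of nilpotent operators''; that is a welcome clarification, since norm convergence of nilpotents alone does not force quasinilpotence without commutativity.) Where you depart is (\ref{ele1}.2): the paper invokes the structural fact that $T^n=I$ forces $T$ to be a direct sum of scalar blocks $cI$ with $c^n=1$, and then uses the singleton spectrum to collapse the sum to one block. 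You instead expand $(cI+C_{\mu'})^n=c^nI$, factor out $C_{\mu'}$ to get $C_{\mu'}P(C_{\mu'})=0$ with $P(0)=nc^{n-1}\neq 0$, and use quasinilpotence of $C_{\mu'}$ plus spectral mapping to see $P(C_{\mu'})$ is invertible, whence $C_{\mu'}=0$. This is more self-contained (no appeal to the diagonalizability theorem for algebraic operators) and also makes cleanly visible why the hypothesis $a\neq 0$ is needed — a point you correctly flag and the paper's wording glosses over, since $C_\mu^n=0$ certainly does not force $C_\mu=0$ when $\inf\supp(\mu)\geq 1/n$. Both arguments are sound; yours is marginally more elementary for that item.
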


We need some extra information on truncated convolution operators.

\begin{lemma}\label{lele1} $T\in\A$ is invertible if and only if
$T=ce^A$ with $c\in\K\setminus\{0\}$ and $A\in\A$ being
quasinilpotent.
\end{lemma}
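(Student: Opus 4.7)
The plan is to lift the statement from operators to measures: any invertible $T = C_\mu \in \A$ will be written as $ce^A$ by taking a logarithm inside the truncated convolution Banach algebra $\M$ and transporting the result back to operators via the homomorphism $\mu \mapsto C_\mu$.

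The ``if'' direction is immediate from spectral mapping: when $T = ce^A$ with $c \neq 0$ and $A \in \A$ quasinilpotent, $\sigma(T) = c\,e^{\sigma(A)} = \{c\}\not\ni 0$, so $T$ is invertible with inverse $c^{-1}e^{-A}$. For the converse, suppose $T = C_\mu$ is invertible. Proposition~\ref{ele1} gives $\sigma(T)=\{\mu(\{0\})\}$, forcing $c := \mu(\{0\})\neq 0$. Decompose $\mu = c\delta + \nu$ with $\nu \in \M_0 := \{\eta\in\M:\eta(\{0\})=0\}$, so $T = cI + C_\nu$ with $C_\nu$ quasinilpotent (again by Proposition~\ref{ele1}). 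The natural candidate is the convolutional logarithm
$$
\eta := \sum_{n=1}^{\infty}\frac{(-1)^{n+1}}{n c^n}\,\nu^{*n},\qquad A := C_\eta.
$$
Once this series converges in total variation, we have $\eta\in\M_0$ (no $\nu^{*n}$ loads $0$), so $A\in\A$ is quasinilpotent; then applying the contractive algebra homomorphism $\M\to L(L^p[0,1])$, $\mu\mapsto C_\mu$, to the Banach-algebra identity $\exp_*(\eta)=\delta+\nu/c$ produces $e^A = I + C_\nu/c$, and hence $T = ce^A$.

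The technical heart, and the step I expect to take most of the work, is proving convergence of the above series in total variation norm, equivalently the statement that $(\M_0,*)$ is a \emph{radical} Banach algebra: $\|\nu^{*n}\|^{1/n}\to 0$ for every $\nu\in\M_0$. I would establish this by an $\epsilon$-splitting argument. Write $\nu = \nu_\epsilon + \nu^\epsilon$ for the restrictions of $\nu$ to $[0,\epsilon)$ and $[\epsilon,1)$; atomlessness of $\nu$ at $0$ gives $\|\nu_\epsilon\|\to 0$ as $\epsilon\to 0^+$, while $(\nu^\epsilon)^{*(n-k)}$ is supported on $[(n-k)\epsilon,\infty)$ and therefore vanishes after restriction to $[0,1)$ once $n-k\geq 1/\epsilon$. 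Expanding $(\nu_\epsilon+\nu^\epsilon)^{*n}$ binomially and discarding the vanishing terms yields
$$
\|\nu^{*n}\|\;\leq\;\sum_{j<1/\epsilon}\binom{n}{j}\,\|\nu_\epsilon\|^{n-j}\|\nu^\epsilon\|^{j},
$$
a polynomial-in-$n$ multiple of $\|\nu_\epsilon\|^{n-O(1)}$; hence $\limsup_n \|\nu^{*n}\|^{1/n}\leq \|\nu_\epsilon\|$, and letting $\epsilon\to 0^+$ gives the radical property. This convergence both defines $\eta$ as an element of $\M_0$ and legitimizes the standard Banach-algebraic rearrangement $\exp_*(\log_*(\delta+\nu/c)) = \delta + \nu/c$ via termwise manipulation of an absolutely convergent double series; the remaining steps are bookkeeping.
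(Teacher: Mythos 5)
Your overall strategy agrees with the paper's: both proofs begin by writing $\mu = c\delta + \nu$ with $c = \mu(\{0\}) \neq 0$ and $\nu(\{0\})=0$, so that $T = c(I+S)$ with $S$ quasinilpotent, and both define $A$ by a logarithmic series. The difference lies in \emph{where} you take the logarithm and what you prove about it. The paper stays at the operator level: it declares $A = \ln(I+S) = \sum_{n\geq 1}\frac{(-1)^{n-1}}{n}S^n$, uses the operator quasinilpotence of $S$ (spectral radius $0$) for operator-norm convergence, and simply asserts that the limit $A$ belongs to $\A$ and is quasinilpotent. That last membership claim deserves a justification — an operator-norm limit of truncated convolutions is not obviously itself a truncated convolution unless one invokes, e.g., the fact that $\A$ is the full commutant of $V$ on $L^1[0,1]$ (which the paper cites elsewhere). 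You instead descend to the measure algebra $(\M,*)$: you define $\eta = \sum_{n\geq 1}\frac{(-1)^{n+1}}{nc^n}\nu^{*n}$ and prove convergence in total variation by establishing that $\M_0 = \{\eta: \eta(\{0\})=0\}$ is a radical Banach algebra, via the $\epsilon$-splitting $\nu = \nu_\epsilon + \nu^\epsilon$ and the fact that $(\nu^\epsilon)^{*j}$ is killed by truncation once $j\epsilon \geq 1$. This argument is correct (the bound $\|\nu^{*n}\| \leq \sum_{j<1/\epsilon}\binom{n}{j}\|\nu_\epsilon\|^{n-j}\|\nu^\epsilon\|^{j}$ gives $\limsup \|\nu^{*n}\|^{1/n}\leq\|\nu_\epsilon\|$, and $\epsilon\to 0^+$ finishes it), and $\eta(\{0\})=0$ follows since each $\nu^{*n}$ places no mass at $0$. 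What your approach buys is a self-contained verification that $A = C_\eta$ genuinely lies in $\A$ and is quasinilpotent, without appealing to closure of $\A$ under operator-norm limits; what it costs is the extra lemma about the radical property, which the paper sidesteps by working one level up.
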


\begin{proof} Of course $ce^A$ belongs to $\A$ and is invertible if
$c\in\K\setminus\{0\}$ and $A\in\A$. Assume now that $T\in\A$ is
invertible. By (\ref{ele1}.3), $T=C_\mu$ with $c=\mu(\{0\})\neq 0$.
Thus $\mu=c\delta+\nu$, where $\nu\in\M$ and $\nu(\{0\})=0$. That
is, $T=c(I+S)$, where $S=\frac 1cC_\nu\in\A$ is quasinilpotent.
Since $S$ is quasinilpotent, the operator
$$
A=\ln(I+S)=\sum_{n=1}^\infty \frac{(-1)^{n-1}}{n}S^n
$$
is well-defined, belongs to $\A$ and is also quasinilpotent. It
remains to observe that $T=ce^A$.
\end{proof}

\begin{lemma}\label{lele2} Let $\{T^t\}_{t\geq 0}$ be a strongly
continuous operator semigroup such that each $T^t$ belongs to $\A$
and $T^1$ is invertible. Then there are a quasinilpotent $A\in\A$
and $a\in\K\setminus\{0\}$ such that $T^t=e^{ta}e^{tA}$ for
$t\in\R_+$.
\end{lemma}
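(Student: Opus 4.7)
The plan is to use Lemma \ref{lele1} to strip a scalar and a quasinilpotent exponential off $T^1$, transfer them to the whole semigroup, and then show that what remains is a continuous homomorphism $\R_+\to\K$ that can be folded back into the scalar.

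First I would apply Lemma \ref{lele1} to the invertible operator $T^1$ to write $T^1=ce^B$ with $c\in\K\setminus\{0\}$ and $B\in\A$ quasinilpotent. When $\K=\R$ this forces $c>0$: the element $T^{1/2}\in\A$ satisfies $(T^{1/2})^2=T^1$ and so $\sigma(T^{1/2})=\{c_{1/2}\}$ with $c_{1/2}\in\R$ and $c_{1/2}^2=c$ by (\ref{ele1}.3). Pick $a\in\K$ with $e^a=c$ and set $W^t:=e^{-ta}e^{-tB}T^t$ for $t\geq 0$.

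Next I would analyse the normalised semigroup $\{W^t\}_{t\geq 0}$. Commutativity of $\A$, together with the fact that exponentials of elements of $\A$ remain in $\A$ (the series $\sum(-tB)^n/n!$ corresponds at the measure level to the convolution-exponential of the underlying measure, which converges in total variation), makes $\{W^t\}$ a strongly continuous operator semigroup in $\A$, and a direct computation gives $W^1=e^{-a}c\cdot I=I$. Hence for every $n\in\N$ one has $(W^{1/n})^n=W^1=I$, and (\ref{ele1}.2) forces $W^{1/n}=\zeta_nI$ with $\zeta_n^n=1$. Consequently $W^{m/n}=\zeta_n^mI$ for all $m,n\in\N$, and strong continuity at $0$ forces $\zeta_n\to 1$. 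Since in an infinite dimensional space any strong limit of scalar multiples of the identity is itself a scalar multiple of the identity, one obtains $W^t=\zeta(t)I$ for every $t\geq 0$, with $\zeta\colon\R_+\to\K\setminus\{0\}$ a continuous multiplicative homomorphism satisfying $\zeta(1)=1$.

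To finish, continuous multiplicative homomorphisms $\R_+\to\K\setminus\{0\}$ are precisely of the form $\zeta(t)=e^{\alpha t}$; the constraint $\zeta(1)=1$ gives $\alpha=0$ when $\K=\R$ and $\alpha=2\pi i k$ for some $k\in\Z$ when $\K=\C$. Replacing $a$ by $a+\alpha$ (which still satisfies $e^a=c$) and putting $A=B$ yields the identity $T^t=e^{ta}e^{tA}$ for all $t\in\R_+$.

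The hard part is the middle paragraph. The crux is the use of (\ref{ele1}.2) to identify the $n$-th roots of $I$ inside $\A$ as scalar multiples of $I$; combined with strong continuity of $\{W^t\}$ at the origin, this pins down the scalars $\zeta_n$ near $1$ and allows $\zeta$ to be defined consistently on a dense set of rationals and extended by continuity. One subtle point to verify carefully is that $e^{tB}$ (and hence each $W^t$) really lies in $\A$, but this is already implicit in the proof of Lemma \ref{lele1}.
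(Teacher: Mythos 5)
Your proof is correct and follows essentially the same route as the paper: factor $T^1=ce^A$ via Lemma~\ref{lele1}, use $(\ref{ele1}.2)$ on the rational powers $T^{k/m}e^{-kA/m}$ to deduce they are scalar multiples of $I$, extend by strong continuity, and identify the resulting continuous multiplicative scalar function as an exponential. The only cosmetic difference is that you normalize by $e^{-ta}$ upfront (which costs you the extra check $c>0$ in the real case), whereas the paper keeps $\alpha(t)=$ the scalar of $T^te^{-tA}$ and observes that $\alpha(0)=1$ together with continuity and multiplicativity already forces $\alpha(t)=e^{ta}$ without any separate positivity argument.
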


\begin{proof} By Lemma~\ref{lele1}, there are $c\in\K\setminus\{0\}$
and $A\in\A$ quasinilpotent such that $T^1=ce^A$. Then for every
$k,m\in\N$, $(T^{k/m}e^{-kA/m})^m=c^kI$. By (\ref{ele1}.2),
$T^{k/m}e^{-kA/m}$ is a scalar multiple of the identity. Thus
$T^te^{-tA}$ is a scalar multiple of the identity whenever
$t\in\R_+$ is rational. By strong continuity $T^te^{-tA}$ is a
scalar multiple of the identity for each $t\in\R_+$. Thus there is a
function $\alpha:\R_+\to\K\setminus\{0\}$ such that
$T^te^{-tA}=\alpha(t)I$ for $t\in\R_+$. Since $\{T_t\}$ and
$\{e^{-tA}\}$ are strongly continuous operator semigroups, whose
members commute, $\{\alpha(t)I\}$ is a strongly continuous operator
semigroup as well. Hence $\alpha$ is continuous, $\alpha(0)=1$ and
$\alpha(t+s)=\alpha(t)\alpha(s)$ for every $t,s\in\R_+$. It follows
that there is $a\in\K$ such that $\alpha(t)=e^{ta}$ for $t\in\R_+$.
Thus $T^t=e^{ta}e^{tA}$ for each $t\in\R_+$.
\end{proof}

Let now $M$ be the operator of multiplication by the argument acting
on the same space as the truncated convolution operators:
$$
Mh(x)=xh(x).
$$

\begin{lemma}\label{lele3}Let $\mu\in\M$. Then the commutator
$[C_\mu,M]$ belongs to $\A$. Moreover, $[C_\mu,M]=C_{\mu'}$, where
$\mu'\in\M$ is the measure absolutely continuous with respect to
$\mu$ with the density $\rho(x)=-x$.
\end{lemma}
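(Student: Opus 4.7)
The plan is to compute the commutator directly by unwinding the definition of $C_\mu$ and $M$, first on, say, $C_0[0,1]$ where everything is pointwise defined, and then extend by continuity to $L^p[0,1]$.

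First I would fix $f\in C_0[0,1]$ and write out both compositions pointwise. Using the formula for $C_\mu$,
\begin{equation*}
C_\mu(Mf)(x)=\int_{[0,x]}(x-t)f(x-t)\,d\mu(t),\qquad M(C_\mu f)(x)=x\int_{[0,x]}f(x-t)\,d\mu(t).
\end{equation*}
Subtracting (the factor $x$ can be pulled inside the integral since it does not depend on $t$) gives
\begin{equation*}
[C_\mu,M]f(x)=\int_{[0,x]}\bigl((x-t)-x\bigr)f(x-t)\,d\mu(t)=\int_{[0,x]}f(x-t)\,(-t)\,d\mu(t).
\end{equation*}
Then I would define $\mu'$ as the measure on $[0,1)$ with density $\rho(t)=-t$ with respect to $\mu$, so that $d\mu'(t)=-t\,d\mu(t)$. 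Since $|\rho(t)|\leq 1$ on $[0,1)$, $\mu'$ has finite variation with $\|\mu'\|\leq\|\mu\|$, hence $\mu'\in\M$. Rewriting the last display with $d\mu'$ in place of $-t\,d\mu$, I obtain $[C_\mu,M]f(x)=\int_{[0,x]}f(x-t)\,d\mu'(t)=C_{\mu'}f(x)$ for every $f\in C_0[0,1]$.

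Finally I would upgrade from $C_0[0,1]$ to each $L^p[0,1]$: both $C_\mu$ and $M$ are bounded on $L^p[0,1]$ (for $M$ this is obvious, as $|x|\leq 1$), hence so is $[C_\mu,M]$; the operator $C_{\mu'}$ is also bounded on $L^p[0,1]$ with $\|C_{\mu'}\|\leq\|\mu'\|$; and the two agree on the dense subspace $C_0[0,1]$ (and similarly on $L^p[0,1]$ itself), so they coincide as operators. This shows $[C_\mu,M]=C_{\mu'}\in\A$.

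There is essentially no serious obstacle here; the only minor points to be careful about are that the $x$ outside the integral in $M(C_\mu f)(x)$ may legitimately be absorbed into the integrand (immediate, since $x$ is $t$-independent and $\mu$ is a finite measure), and the verification that the new measure $\mu'$ still lives in $\M$ (automatic from $|-t|\leq 1$ on $[0,1)$). The computation is otherwise a one-line cancellation $(x-t)-x=-t$.
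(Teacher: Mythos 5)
Your proof is correct, and it is genuinely different from the paper's. The paper proves the identity only for $\mu$ with polynomial density $x^n$ (via integration by parts), and then extends to all of $\M$ by observing that the set of $\mu$ satisfying $[C_\mu,M]=C_{\mu'}$ is closed in the weak-$*$ topology induced by the pairing $(\M,C[0,1])$ and that polynomial densities are weak-$*$ dense. Your argument skips the approximation step entirely: for arbitrary $\mu\in\M$ and $f\in C_0[0,1]$ you compute both compositions pointwise, pull the $t$-independent factor $x$ into the integral, and get the cancellation $(x-t)-x=-t$ directly in the integrand, identifying the result with $\int_{[0,x]}f(x-t)\,d\mu'(t)$ by the definition of the Radon--Nikodym derivative. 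That is a cleaner and more self-contained route; the paper's weak-$*$ closedness claim, while plausible, is itself a small verification that your approach does not need. The only hypotheses you do invoke --- that $Mf\in C_0[0,1]$ (clear since $(Mf)(0)=0$), that $|\rho|\leq1$ on $[0,1)$ so $\mu'\in\M$ with $\|\mu'\|\leq\|\mu\|$, and that two bounded operators on $L^p[0,1]$ agreeing on the dense subspace $C_0[0,1]$ are equal --- are all correctly handled. As a sanity check, your formula reproduces the paper's identity $[V,M]=-V^2$ (equation (2.2) with $z=1$) when $\mu$ is Lebesgue measure.
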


\begin{proof}It is easy to verify that the set of $\mu\in\M$
satisfying $[C_\mu,M]=C_{\mu'}$ is closed in $\M$ with respect to
the weak topology $\sigma$ provided by the natural dual pairing
$(\M,C[0,1])$. Thus it is enough to prove the required equality for
$\mu$ from a $\sigma$-dense set in $\M$. As such a set we can take
the set of absolutely continuous measures with polynomial densities.
By linearity, it suffices to prove the equality $[C_\mu,M]=C_{\mu'}$
for $\mu$ being absolutely continuous with the density $d(x)=x^n$
for $n\in\Z_+$. In the latter case the required equality is an
elementary integration by parts exercise (left to the reader).
\end{proof}

Since $\mu'=0$ if and only if $\mu=c\delta$ with $c\in\K$ and
$C_\delta=I$, we arrive to the following corollary.

\begin{corollary}\label{lele4}
The equality $[C_\mu,M]=0$ holds if and only if $C_\mu=cI$ with
$c\in\K$.
\end{corollary}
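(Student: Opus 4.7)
The plan is to read the corollary as a one-line consequence of Lemma~\ref{lele3} plus Proposition~\ref{ele1}. First I would apply Lemma~\ref{lele3} to rewrite the hypothesis $[C_\mu,M]=0$ as $C_{\mu'}=0$, where $\mu'$ is the measure absolutely continuous with respect to $\mu$ with density $\rho(x)=-x$.

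Second, I would establish the general fact that for $\nu\in\M$, $C_\nu=0$ forces $\nu=0$. For this, let $a=\inf\supp(\nu)\in[0,1)$. If $a=0$, Proposition~\ref{ele1}.1 says $C_\nu$ is injective, hence nonzero unless $\nu=0$. If $a>0$, the discussion preceding Proposition~\ref{ele1} gives that $C_\nu$ is nilpotent of order exactly $n$, the smallest positive integer with $na\geq 1$; since the underlying measure space is $[0,1)$ we have $a<1$, so $n\geq 2$, and therefore $C_\nu^{\,n-1}\neq 0$, in particular $C_\nu\neq 0$.

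Third, applying this to $\nu=\mu'$, the equality $C_{\mu'}=0$ forces $\mu'=0$. Since $\mu'(A)=-\int_A x\,d\mu(x)$ for every Borel $A\subseteq[0,1)$, vanishing of $\mu'$ means that $x=0$ holds $\mu$-almost everywhere, i.e.\ $\mu$ is concentrated at $\{0\}$. Thus $\mu=c\delta$ for some $c\in\K$, and then $C_\mu=cC_\delta=cI$. The converse implication is immediate because a scalar multiple of the identity commutes with every operator.

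There is no genuine obstacle here; the content is packaged in Lemma~\ref{lele3} and the cases treated in Proposition~\ref{ele1}. The only piece that is not literally stated in the preceding material is the implication $C_\nu=0\Rightarrow\nu=0$, and that is what I would take slight care to spell out as above.
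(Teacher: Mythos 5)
Your argument is correct and follows the same route as the paper: Lemma~\ref{lele3} turns $[C_\mu,M]=0$ into $C_{\mu'}=0$, vanishing of $\mu'$ forces $\mu=c\delta$, and $C_\delta=I$ gives $C_\mu=cI$, with the converse being trivial. The only addition is your explicit verification of the implication $C_\nu=0\Rightarrow\nu=0$, which the paper leaves implicit; the case analysis via Proposition~\ref{ele1} and nilpotency order is a valid way to get it, though one could also simply note that $C_\nu$ applied to the constant function $1$ in $L^1[0,1]$ yields the function $x\mapsto\nu([0,x])$, so $C_\nu=0$ forces $\nu=0$.
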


The operator $M$ is needed in order to apply Lemma~\ref{ml} to prove
Theorem~\ref{mai1}. The trickiest part of such an application is due
to the fact that the condition of $0$ being not in the convex span
of the operators $R_j$ may fail for operator semigroups contained in
$\A$. The next lemma allows us to determine exactly when does this
condition fail.

\subsection{Main lemma}

Recall that for a non-zero finite Borel $\sigma$-additive complex
valued measure $\mu$ on $\R$ with compact support, its Fourier
transform
$$
\widehat\mu(z)=\int_\R e^{-itz}\,d\mu(t)
$$
is an entire function of exponential type \cite{levin} bounded on
the real axis. Moreover, the numbers $a=\inf\supp(\mu)$ and
$b=\sup\supp(\mu)$ determine the indicator function of
$\widehat\mu$. Namely,
\begin{equation}\label{indi}
h_\mu(\theta)=\slim_{r\to+\infty}\frac{\ln|\widehat\mu(re^{i\theta})|}{r}
=\left\{\begin{array}{ll}b\sin\theta&\text{if $\theta\in[0,\pi]$}\\
a\sin\theta&\text{if $\theta\in(-\pi,0)$}\end{array}\right.
\end{equation}
Moreover, by the Cartwright theorem \cite{levin}, $\widehat\mu$ is
of completely regular growth on each ray $\{re^{i\theta}:r>0\}$ with
$\theta\in(-\pi,0)\cup(0,\pi)$. That is, for every
$\theta\in(-\pi,0)\cup(0,\pi)$, there is an open set
$E_\theta\subset(0,\infty)$ such that
\begin{equation}\label{crg}
\lim_{r\to+\infty\atop r\in
E_\theta}\frac{\ln|\widehat\mu(re^{i\theta})|}{r}=h_\mu(\theta)\ \
\text{and}\ \ \lim_{r\to+\infty}\frac{\lambda([0,r]\cap
E_\theta)}{r}=1,
\end{equation}
where $\lambda$ is the Lebesgue measure on $\R$. Recall that a
subset of $\R_+$ satisfying the second equality in (\ref{crg}) is
said to have {\it density} $1$. Thus the completely regular growth
condition means that upper limit in the definition of the indicator
function turns into the limit if we restrict ourselves to $r$ from a
suitable open set of density $1$.

\begin{lemma}\label{ML} Let $\mu_1,\dots,\mu_n$ be finite Borel
$\sigma$-additive complex valued measure on $\R$ with compact
support satisfying $\inf\supp(\mu_j)=0$ for $1\leq j\leq n$. For
each $j\in\{1,\dots,n\}$, let
$$
\nu_j=\mu_1*{\dots}*\mu_{j-1}*\mu_j'*\mu_{j+1}*{\dots}*\mu_n,
$$
where $*$ is the convolution and $\mu'$ denotes the measure
absolutely continuous with respect to $\mu$ with the density
$\rho(x)=-x$. Assume also that $c_1,\dots,c_n>0$ and
$$
\text{$\inf\supp(\nu)>0$, where $\nu=c_1\nu_1+{\dots}+c_n\nu_n$}.
$$
Then $\mu_j(\{0\})\neq 0$ for $1\leq j\leq n$.
\end{lemma}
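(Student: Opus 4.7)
My plan is to pass to the Laplace transform side and derive a contradiction using Cartwright's theorem on completely regular growth together with a logarithmic-derivative trick. Setting $J_l(s):=\mathcal L\mu_l(s)=\widehat{\mu_l}(-is)$, differentiation under the integral gives
\[
\mathcal L\nu(s)=\sum_{j=1}^n c_j J_j'(s)\prod_{l\ne j}J_l(s)=\Bigl(\prod_l J_l(s)\Bigr)\cdot\Phi'(s),\qquad\Phi(s):=\sum_l c_l\log J_l(s),
\]
so that the real-valued function $\re\Phi(s)=\sum_l c_l\ln|J_l(s)|=\ln\prod_l|J_l(s)|^{c_l}$ is a primitive of $\re\Phi'$ off the zeros of the $J_l$.

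Suppose, toward contradiction, that $\mu_k(\{0\})=0$ for some $k$ yet $\inf\supp\nu\geq\epsilon>0$. Three ingredients enter. First, $J_k(s)\to\mu_k(\{0\})=0$ as $s\to+\infty$ by dominated convergence, so $\prod_l|J_l(s)|^{c_l}\to 0$ and hence $\re\Phi(s)\to-\infty$. Second, the pointwise Paley--Wiener bound gives $|\mathcal L\nu(s)|\leq\|\nu\|e^{-\epsilon s}$ for all $s>0$. Third, each $\widehat{\mu_l}$ is an entire function of exponential type bounded on $\R$ with $h_{\widehat{\mu_l}}(-\pi/2)=-\inf\supp\mu_l=0$ by~\eqref{indi}, so Cartwright's theorem~\eqref{crg} at $\theta=-\pi/2$ furnishes a common density-$1$ set $E\subset(0,\infty)$ on which, for every $\delta>0$ and every $l$, $|J_l(s)|\geq e^{-\delta s}$ for all sufficiently large $s\in E$. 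Combining the last two ingredients, on $E$ we obtain
\[
|\Phi'(s)|=\frac{|\mathcal L\nu(s)|}{\prod_l|J_l(s)|}\leq\|\nu\|\,e^{-(\epsilon-n\delta)s},
\]
so $\Phi'$ decays exponentially on this density-$1$ set.

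To close the argument I must convert this decay into a bound on $\re\Phi$ that contradicts $\re\Phi(s)\to-\infty$. I would do this by shifting the contour slightly off the real axis to $\{s+i\eta:s>0\}$ for a generic small $\eta>0$. The shifted function $J_l(\cdot+i\eta)=\widehat{\mu_l}(\eta-i\cdot)$ is the Laplace transform of the compactly supported modulated measure $e^{-it\eta}d\mu_l$, so every Cartwright-type estimate transfers, while the shift simultaneously removes the isolated poles of $\Phi'$ at the zeros of the $J_l$. Exponential decay of $\Phi'$ on a density-$1$ subset of the shifted line, together with its pointwise regularity there, then forces $\int_S^\infty\Phi'(s+i\eta)\,ds$ to converge, so that $\re\Phi(s+i\eta)$ remains bounded as $s\to\infty$, directly contradicting the divergence established in the first ingredient. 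The main technical obstacle is exactly this last step: converting the density-$1$ exponential bound on $\Phi'$ into an integral bound requires both avoiding the poles of $\Phi'$ (handled by the contour shift) and controlling $\Phi'$ on the density-$0$ complement of $E$ (handled by the analogous Cartwright estimates on the shifted line).
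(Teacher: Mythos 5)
Your proposal works on the transform side with the logarithmic derivative $\Phi'$, just as the paper does with $\sum_j c_j f_j'/f_j$, and both invoke Cartwright's completely-regular-growth theorem for Fourier transforms of compactly supported measures. But the argument has a genuine gap at the final step, which you yourself flag. You want $\int_S^\infty\Phi'(s+i\eta)\,ds$ to converge from exponential decay of $\Phi'$ on a density-$1$ set. The contour shift $s\mapsto s+i\eta$ moves you off the isolated zeros of the $J_l$, but that is not the obstruction: off the density-$1$ Cartwright set the quantities $|J_l(s+i\eta)|$ can still be arbitrarily small, so $|\Phi'(s+i\eta)|=|\mathcal{L}\nu(s+i\eta)|/\prod_l|J_l(s+i\eta)|$ can blow up on a set of density zero, and a density-$1$ pointwise bound on an integrand says nothing about the integral. ``Analogous Cartwright estimates'' cannot rescue this, since Cartwright gives you nothing on the exceptional density-$0$ set.

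The paper sidesteps this with a normalization trick you are missing. Multiplying all $c_j$ by a common positive constant leaves $\nu$ unchanged up to a scalar, so one may assume $c_j>1$ for all $j$. Then instead of integrating the logarithmic derivative $\Phi'=\phi'/\phi$, integrate $\phi'$ itself, where $\phi=f_1^{c_1}\cdots f_n^{c_n}$ (a branch along a zero-free ray), which factors as
$$
|\phi'(z)|=|\widehat\nu(z)|\,|f_1(z)|^{c_1-1}\cdots|f_n(z)|^{c_n-1}.
$$
Since each $c_j-1>0$ and each $|f_j|$ is bounded on the closed lower half-plane (the measures are supported in $[0,\infty)$), the right-hand side is bounded by a constant times $e^{ar\sin\theta}$ \emph{pointwise}, with no density-$1$ caveat. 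Integrating from $r$ to $\infty$ along a zero-free ray $re^{i\theta}$ with $\theta\in(-\pi,0)$, and using that $\phi(re^{i\theta})\to 0$ (because $c_1>0$ and $\widehat{\mu_1}(re^{i\theta})\to 0$ when $\mu_1(\{0\})=0$), yields $|\phi(re^{i\theta})|\leq c\,e^{ar\sin\theta}$, which contradicts the Cartwright lower bound $|\phi(re^{i\theta})|\geq e^{-\delta r}$ on a density-$1$ set. In short: integrate $\phi'$, not $\Phi'$, after rescaling $c_j$, so the factors $f_j$ in the denominator become harmless positive powers in the numerator.
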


\begin{proof} Assume the contrary. Then without loss of generality
we can assume that $\mu_1(\{0\})=0$. We can also assume that $c_j>1$
for every $j$. Indeed, multiplying all $c_j$ by the same positive
number does not change anything.

Since $\mu_1(\{0\})=0$, $\mu_1$ is the variation norm limit of the
sequence $\{\mu_{1,n}\}_{n\in\N}$ of the restrictions of $\mu_1$ to
$[2^{-n},1]$. Let $\alpha\in(0,\pi/2)$ and
$A_\alpha=\{re^{i\theta}:r\geq 0,\ \theta\in[\alpha-\pi,-\alpha]\}$.
By definition of the Fourier transform, each $\widehat{\mu_{1,n}}$
converges to $0$ as $|z|\to\infty$ for $z$ from the angle
$A_\alpha$. Moreover, $\widehat{\mu_{1,n}}$ converge uniformly to
$\widehat{\mu_1}$ uniformly on $A_\alpha$. Hence
\begin{equation}\label{li}
\lim_{r\to+\infty}\widehat{\mu_1}(re^{i\theta})=0\ \ \text{for}\ \
-\pi<\theta<0.
\end{equation}
Since $\inf\supp(\mu_j)=0$ and $\inf\supp(\mu_j')\geq 0$,
\begin{equation}\label{li1}
\text{each $\widehat{\mu_j}$ and $\widehat{\mu_j'}$ is bounded on
the half-plane $\{z\in\C:{\tt Im}\,z\leq 0\}$.}
\end{equation}
For convenience of the notation, we denote $f_j=\widehat {\mu_j}$
for $1\leq j\leq n$. Differentiating the integral defining
$\widehat{\mu_j}$, we see that $i\widehat{\mu_j'}$ is the derivative
of $\widehat{\mu_j}$:
$$
\widehat{\mu_j'}=-if_j'.
$$
Since the Fourier transform of the convolution measures is the
product of their Fourier transforms, we have
\begin{equation*}
\widehat{\nu_j}=-if_1\dots f_{j-1}f_j'f_{j+1}\dots f_n.
\end{equation*}
It immediately follows that
\begin{equation}\label{NU}
\widehat\nu=-if_1\dots f_n\sum_{j=1}^nc_j \frac{f_j'}{f_j}.
\end{equation}
Since $\inf\supp(\nu)>0$, there are $a,c>0$ such that
\begin{equation}\label{NU1}
|\widehat\nu(re^{i\theta})|\leq ce^{ar\sin\theta}\ \ \text{for
$-\pi\leq\theta\leq0$ and $r\geq 0$}.
\end{equation}
Pick $\theta\in(-\pi,0)$ such that the ray
$\ell=\{re^{i\theta}:r>0\}$ is free of zeros of the entire functions
$f_j$. Then there is a connected and simply connected open set
$U\subset\C$ such that $\ell\subset U$ and $f_j$ have no zeros on
$U$. Then the multivalued holomorphic function $f_1^{c_1}\dots
f_n^{c_n}$ splits over $U$ and we can pick its holomorphic branch
$\phi:U\to\C$. Differentiating and using (\ref{NU}), we obtain
$$
\phi'=\phi \sum_{j=1}^nc_j
\frac{f_j'}{f_j}=\frac{i\phi\widehat\nu}{f_1\dots f_n}.
$$
Using the definition of $\phi$ and the above display, we have
\begin{equation}\label{mod}
|\phi'(z)|=|\widehat\nu(z)||f_1(z)|^{c_1-1}\dots |f_n(z)|^{c_n-1}\ \
\text{and}\ \ |\phi(z)|=|f_1(z)|^{c_1}\dots |f_n(z)|^{c_n}
\end{equation}
for each $z\in U$. Since $c_j>0$, (\ref{li}), (\ref{li1}) and the
second equality in (\ref{mod}) show that
\begin{equation}\label{qwqw}
|\phi(re^{i\theta})|\to 0\ \ \text{as}\ \ r\to+\infty.
\end{equation}
Since $c_j>1$, (\ref{NU1}), (\ref{li1}) and the first equality in
(\ref{mod}) imply that there is $b>0$ such that
\begin{equation}\label{qwqw1}
|\phi'(re^{i\theta})|\leq be^{ar\sin\theta}\ \ \text{for each $r\geq
0$}.
\end{equation}
According to (\ref{qwqw}) and (\ref{qwqw1}),
$$
\phi(re^{i\theta})=-e^{-i\theta}\int_{r}^\infty \phi'(\rho
e^{i\theta})\,d\rho
$$
and therefore using (\ref{qwqw1}) once again, we get
$$
|\phi(re^{i\theta})|\leq b\int_r^\infty
e^{a\rho\sin\theta}\,d\rho=ce^{ar\sin\theta}\ \ \text{for all
$r>0$,}
$$
where $c=\frac{-b}{a\sin\theta}$. Hence
\begin{equation}\label{est1}
\slim_{r\to+\infty}\frac{\ln|\phi(re^{i\theta})|}{r}\leq
a\sin\theta<0.
\end{equation}
On the other hand, by (\ref{crg}), there are open subsets
$E_1,\dots,E_n$ of $(0,\infty)$ of density $1$ such that $\frac{\ln
|f_j(re^{i\theta})|}{r}\to0$ as $r\to+\infty$, $r\in E_j$. Since
$E=E_1\cap{\dots}\cap E_n$ is also a set of density $1$, $E$ is
unbounded and $\frac{\ln |f_j(re^{i\theta})|}{r}\to0$ as
$r\to+\infty$, $r\in E$ for $1\leq j\leq n$. Since
$\ln|\phi(re^{i\theta})|=\sum\limits_{j=1}^n c_j\ln
|f_j(re^{i\theta})|$, we arrive to
$$
\lim_{r\to +\infty\atop r\in E}\frac{\ln |\phi(re^{i\theta})|}{r}=0,
$$
which contradicts (\ref{est1}). The proof is complete.
\end{proof}

\section{Proof of Theorem~\ref{mai1}}

Since the interior of the closure of a projective orbit of a tuple
of commuting continuous linear operators does not change if we
remove the operators with non-dense range from the tuple, we can
without loss of generality assume that the operators $T_j$ in
Theorem~\ref{mai1} have dense range. Thus Theorem~\ref{mai1} is a
corollary of the following more general result.

\begin{theorem}\label{mai2} Let
$\{T^{[t]}\}_{t\in\R_+^k\times\Z_+^m}$ be an operator
$(k,m)$-semigroup on $L^1[0,1]$ consisting of truncated convolution
operators with dense range. Then for any $f\in L^1[0,1]$, the
projective orbit
$$
O=\{wT^{[t]}f:w\in\K,\ t\in\R_+\times\Z_+^m\}
$$
is nowhere dense in $L^1[0,1]$ with respect to the weak topology.
\end{theorem}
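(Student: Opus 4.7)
The plan is to derive Theorem~\ref{mai2} from Lemma~\ref{ml}, applied with $\B=\A$, with $M$ the operator of multiplication by the argument on $L^1[0,1]$ (which is norm-bounded, hence weakly continuous), and $x=f$. Before invoking Lemma~\ref{ml}, I would perform two reductions. If $f$ vanishes almost everywhere on $[0,\epsilon]$ for some $\epsilon>0$, then every $C_\mu f$ with $\mu\in\M$ also vanishes on $[0,\epsilon]$ (immediate from the defining formula of $C_\mu$), so $O$ is contained in a proper closed subspace of $L^1[0,1]$ and is weakly nowhere dense. Thus I may assume $\inf\supp f=0$. Second, if some $T_j$ equals a scalar $cI$ (necessarily $c\neq 0$ by dense range), the factor $c^{t_j}$ in $T^{[t]}f$ can be absorbed into $w$, so I may delete such coordinates and assume no $T_j$ is a scalar.

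Write $T_j=C_{\mu_j}$ with $\inf\supp\mu_j=0$ (by (\ref{ele1}.1)). By Lemma~\ref{lele3}, $S_j:=[T_j,M]=C_{\mu_j'}\in\A$; by Corollary~\ref{lele4}, $S_j\neq 0$. Since multiplication by $x$ preserves the infimum of essential support, $\inf\supp Mf=0$, and Lemma~\ref{trco} then produces injective truncated convolution operators $B,C\in\A$ with dense range satisfying $CMf=Bf$. At this point every hypothesis of Lemma~\ref{ml} is in place except the condition that the convex hull of the operators $R_j=T_1\dots T_{j-1}S_jT_{j+1}\dots T_n$ does not contain $0$.

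The main obstacle is verifying this convex-hull condition, which is where Lemma~\ref{ML} is the key tool. Writing $R_j=C_{\nu_j}$ with $\nu_j=\mu_1*\dots*\mu_j'*\dots*\mu_n$, assume for contradiction that $\sum_{j\in I}c_j\nu_j=0$ for some nonempty $I\subseteq\{1,\dots,n\}$ and positive coefficients $c_j$. Factoring out the convolution of the $\mu_i$ for $i\notin I$ (nonzero by Titchmarsh's theorem on supports) reduces the identity to the analogous sum for the sub-tuple $(\mu_j)_{j\in I}$. Passing to Fourier transforms with $f_j=\widehat{\mu_j}$, the vanishing becomes $\sum_{j\in I}c_j f_j'/f_j\equiv 0$, so $\phi=\prod_{j\in I}f_j^{c_j}$ is locally constant off the discrete zero-set of $\prod_{j\in I} f_j$; since that complement is connected, $|\phi|$ is globally constant. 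Invoking the indicator identity (\ref{indi}) together with the completely regular growth of each $f_j$---the same machinery that underlies the proof of Lemma~\ref{ML}---then forces $\sum_{j\in I}c_j\sup\supp\mu_j=0$, so each $\mu_j$ with $j\in I$ is a point mass at $0$, contradicting our reduction. With the convex-span condition verified, Lemma~\ref{ml} applies and $O$ is weakly nowhere dense in $L^1[0,1]$.
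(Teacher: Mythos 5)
Your overall scaffolding is right: reduce to $\inf\supp f=0$, invoke Lemma~\ref{trco} to produce $B,C$ with $CMf=Bf$, set $S_j=[T_j,M]=C_{\mu_j'}$ via Lemma~\ref{lele3}, and try to feed this into Lemma~\ref{ml}. The fatal gap is your claim that, after discarding scalar $T_j$, the convex span of the $R_j$ cannot contain $0$. That claim is false. Take $k=2$, $m=0$, $A\in\A$ quasinilpotent and nonzero, $T_1=e^{A}$, $T_2=e^{-A}$. Neither $T_j$ is a scalar, both have dense range, yet $S_1=T_1[A,M]$, $S_2=-T_2[A,M]$, so $R_1=S_1T_2=[A,M]$ and $R_2=T_1S_2=-[A,M]$, giving $\tfrac12 R_1+\tfrac12 R_2=0$. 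So the convex-hull condition really can fail for a semigroup that survives your reductions, and Lemma~\ref{ml} simply does not apply to it directly.

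The reason your verification argument breaks is twofold. First, $\sum_{j\in I}c_jR_j=0$ as operators on $L^1[0,1]$ only tells you that the measure $\nu=\sum_{j\in I}c_j\nu_j$ vanishes on $[0,1)$, i.e.\ $\inf\supp\nu\geq 1$ --- not that $\nu=0$ globally. So $\widehat\nu$ is not identically zero, $\sum_{j\in I}c_jf_j'/f_j\not\equiv 0$, and $\phi$ is not constant; the conclusion $\sum c_j\sup\supp\mu_j=0$ does not follow. Second, even applying Lemma~\ref{ML} with its correct hypothesis ($\inf\supp\nu>0$) only yields $\mu_j(\{0\})\neq 0$ --- that each relevant $T_j$ is \emph{invertible} --- not that $\mu_j$ is a point mass at $0$. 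The paper's proof deliberately does \emph{not} try to verify the convex-hull condition outright: it assumes $O$ is somewhere dense with $k+m$ minimal, and shows via Lemmas~\ref{lele1}, \ref{lele2} that the offending invertible $T_{j_l}$ can be written $T_{j_l}^t=e^{tA_{j_l}}$, whereupon $\sum c_lR'_l=0$ forces $[\sum c_lA_{j_l},M]=0$, then $\sum c_lA_{j_l}=0$ by Corollary~\ref{lele4} and quasinilpotency. This linear dependence among the $A_{j_l}$ lets one repackage the whole semigroup as a finite union of operator $(k-1,m)$-semigroups of truncated convolutions with dense range, contradicting the minimality of $k+m$. This induction on the number of parameters is the essential step your proposal is missing; without it, the obstruction illustrated by $(e^A,e^{-A})$ cannot be resolved.
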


\begin{proof} Assume the contrary. That is, $O$ is somewhere dense.
We can also assume that the number $k+m$ is minimal possible for
which there exists an operator $(k,m)$-semigroup on $L^1[0,1]$ of
truncated convolution operators with dense range possessing a
somewhere dense projective orbit. Next, the infimum $c$ of the
support of $f$ must equal $0$. Indeed, otherwise $O$ is nowhere
dense as a subset of the proper closed linear subspace $L$ of $g\in
L^1[0,1]$ vanishing on $[0,c]$. Let $M:L^1[0,1]\to L^1[0,1]$ be the
multiplication by the argument operator $Mh(x)=xh(x)$. Since the
infimum of the support of $Mf$ is also $0$, Lemma~\ref{trco}
provides truncated convolution operators $B$ and $C$ with dense
range such that $CMf=Bf$. By Lemmas~\ref{lele1} and~\ref{lele2}, we
can without loss of generality assume that $1\leq j\leq k$ and
$T_j^t=e^{tA_j}$ with a quasinilpotent $A_j\in\A$ for each
invertible $T_j$. In particular, each $T_j$ with $j>k$ is
non-invertible. Denote $S_j=[T_j,M]$ for $1\leq j\leq m+k$. By
Lemma~\ref{ele1}, $T_j=C_{\mu_j}$ for $1\leq j\leq m+k$ with
$\mu_j\in\M$ and $\inf\supp(\mu_j)=0$. By Lemma~\ref{lele3},
$S_j=C_{\mu'_j}$, where $\mu'$ is the measure absolutely continuous
with respect to $\mu$ with the density $\rho(x)=-x$. If the convex
span of the operators $R_1,\dots,R_{m+k}$ with
$$
R_j=T_1\dots T_{j-1}S_jT_{j+1}\dots T_{k+m}
$$
does not contain the zero operator, Lemma~\ref{ml} with $\B=\A$
guarantees that $O$ is nowhere dense. This contradiction shows that
$0$ is in the convex span of $R_j$. Then there are $1\leq
j_1<{\dots}<j_r\leq k+m$ and $c_1,\dots,c_r>0$ such that
$c_1R_{j_1}+{\dots}+c_rR_{j_r}=0$. Since each $T_j$ has dense range
the last equality and the definition of $R_j$ implies that
$$
c_1R'_1+{\dots}+c_rR'_r=0,\ \ \text{where}\ \ R'_l=T_{j_1}\dots
T_{j_{l-1}}S_{j_l}T_{j_{l+1}}\dots T_{j_r}.
$$
Since $R'_l=C_{\nu_l}$  with $\nu_l$ being the restriction to
$[0,1)$ of the convolution
$$
\nu_l=\mu_{j_1}*{\dots}*
\mu_{j_{l-1}}*\mu'_{j_l}*\mu_{j_{l+1}}*{\dots}*\mu_{j_r},
$$
the equality $c_1R'_1+{\dots}+c_rR'_r=0$ implies that the infimum of
the support of the above convolution is at least $1$. By
Lemma~\ref{ML}, $\mu_{j_l}(\{0\})\neq 0$ for $1\leq l\leq r$. By
Lemma~\ref{ele1}, each $T_{j_l}$ is invertible. Hence $1\leq j_l\leq
k$ and $T_{j_l}^t=e^{tA_{j_l}}$ for $1\leq l\leq r$ and $t\in\R_+$
with $A_{j_l}\in\A$ being quasinilpotent. Rearranging the order of
$T_j$ with $1\leq j\leq k$, if necessary, we can without loss of
generality assume that $j_l=l$ for $1\leq l\leq r$. That is,
$T_j^t=e^{tA_j}$ for $1\leq j\leq r$ with quasinilpotent $A_j\in\A$.
It is easy to verify that
$$
S_j=[T_j,M]=[e^{A_j},M]=e^{A_j}[A_j,M]=T_j[A_j,M]\ \ \text{for
$1\leq j\leq r$}.
$$
Thus the equality $c_1R'_1+{\dots}+c_rR'_r=0$ can be rewritten as
$T_1\dots T_r(c_1[A_1,M]+{\dots}+c_r[A_r,M])=0$. Since $T_j$ are
invertible for $1\leq j\leq r$, we obtain
$[c_1A_1+{\dots}+c_rA_r,M]=0$. By Corollary~\ref{lele4},
$c_1A_1+{\dots}+c_rA_r=c I$ with $c\in\K$. Since $A_j$ commute and
are quasinilpotent, $c_1A_1+{\dots}+c_rA_r$ is also quasinilpotent
and therefore $c=0$. Thus $c_1A_1+{\dots}+c_rA_r=0$ and the
$\R$-linear span of $A_1,\dots,A_r$ coincides with the $\R$-linear
span of $A_2,\dots,A_r$. Hence
\begin{align*}
&\{T^{[t]}:t\in\R_+^k\times\Z_+^m\}\subseteq {\cal M},\ \
\text{where}
\\
&{\cal M}=\{e^{\tau_1 A_2}\dots e^{\tau_{r-1}A_r}T_{r+1}^{s_1}\dots
T_{k}^{s_{k-r}}T_{k+1}^{q_1}\dots T_{k+m}^{q_m}:\tau\in\R^{r-1},\
s\in\R_+^{k-r},\ q\in\Z_+^m\}.
\end{align*}
Thus the semigroup ${\cal M}$ admits a somewhere dense projective
orbit. Since ${\cal M}$ is the union of $2^{r-1}$ subsemigroups
${\cal M}_\epsilon$ with $\epsilon\in \{-1,1\}^{r-1}$, where
$$
{\cal M}_\epsilon=\{e^{\tau_1\epsilon_1 A_2}\dots
e^{\tau_{r-1}\epsilon_{r-1}A_r}T_{r+1}^{s_1}\dots
T_{k}^{s_{k-r}}T_{k+1}^{q_1}\dots T_{k+m}^{q_m}:\tau\in\R_+^{r-1},\
s\in\R_+^{k-r},\ q\in\Z_+^m\},
$$
at least one of the semigroups ${\cal M}_\epsilon$ admits a
somewhere dense projective orbit. Since each ${\cal M}_\epsilon$ is
an operator $(k-1,m)$-semigroup of truncated convolution operators
with dense range, we have arrived to a contradiction with the
minimality of $k+m$.
\end{proof}

\section{Proof of Theorem~\ref{main2}}

Throughout this section we use the following notation. For $a\in
L^1[0,1]$, $\nu_a$ is the absolutely continuous measure on $[0,1]$
with the density $a$ and $R_a=I+C_{\nu_a}=C_{\delta+\nu_a}$. Of
course, each $R_a$ is a truncated convolution operator.

\begin{lemma}\label{asy} Both sets
\begin{align*}
A&=\{(a,f)\in L^1[0,1]\times C_0[0,1]:\|R_a^nf\|_1\to\infty\}\\
\text{and}\ \ B&=\{(a,f)\in L^1[0,1]\times
C_0[0,1]:\|R_a^nf\|_\infty\to0\}
\end{align*}
are dense in the Banach space $L^1[0,1]\times C_0[0,1]$.
\end{lemma}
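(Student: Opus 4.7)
We prove density of $A$ and $B$ separately. Given $(a_0,f_0)\in L^1[0,1]\times C_0[0,1]$ and $\varepsilon>0$, I construct a nearby pair $(a,f)$ with the required asymptotic behavior, analyzing the binomial expansion
$$ R_a^n f = \sum_{k=0}^n \binom{n}{k}\,C_{\nu_a}^k f. $$

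For density of $A$ the prototype is $a\equiv 1$ and $f_*(x)=x$: since $V^k$ sends $f_*$ to $x\mapsto x^{k+1}/(k+1)!$, one obtains the explicit formula $(R_1^n f_*)(x) = x\sum_{k=0}^n \binom{n}{k}\,x^k/(k+1)!$, a Bessel-type sum whose $L^1$-norm is monotone in $n$ and tends to $\infty$ at rate comparable to $e^{2\sqrt{n}}/n^{3/4}$. Given an arbitrary $(a_0,f_0)$, I perturb by $a=a_0+\delta$ and $f=f_0+\delta f_*$ for a small scalar $\delta>0$; exploiting commutativity of $\A$, the expansion of $R_a^n=(I+C_{\nu_{a_0}}+\delta V)^n$ as a trinomial sum isolates the Bessel-type positive growth of the prototype, while the contributions involving $C_{\nu_{a_0}}$ and $f_0$ are controlled by $\|a_0\|_1$ and $\|f_0\|_\infty$. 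Integrating against the positive functional $g\mapsto \int_0^1 g$, the positive growth dominates and $\|R_a^nf\|_1\to\infty$.

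For density of $B$ the prototype is $a\equiv -1$ and $f_*(x)=x$: the same identity with alternating signs gives $(R_{-1}^n f_*)(x) = x\sum_{k=0}^n \binom{n}{k}\,(-x)^k/(k+1)!$, and the Bessel asymptotics $\sum_k (-u)^k/((k+1)!\,k!) \sim J_1(2\sqrt{u})/\sqrt{u}$ yield $\|R_{-1}^n f_*\|_\infty = O(n^{-3/4})\to 0$. For density I would similarly take $a=a_0-\delta$ and $f=f_0+\delta'f_*$ for small $\delta,\delta'>0$, and attempt to transfer the Bessel decay of the perturbation to the full pair.

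The principal obstacle is the density argument for $B$. Unlike the growth case, where positivity gives a clean one-sided estimate, sup-norm decay is a fine cancellation phenomenon fragile under perturbation. Since $R_a$ has spectral radius $1$ and $\|R_a^n\|\to\infty$, the ``error'' contribution $R_a^n f_0$ can itself grow in sup norm, and no spectral or uniform-boundedness argument controls it automatically. One must show, by a delicate uniform-in-$x$ analysis of the Bessel-type cancellation, that the sup-norm contribution of $R_a^n f_0$ is dominated by the Bessel-decaying contribution of $\delta'R_a^nf_*$; this uniform estimate is the technical heart of the proof.
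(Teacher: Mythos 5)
Your proposal takes a fundamentally different route from the paper, and the route does not work. The paper does not perturb an arbitrary pair $(a_0,f_0)$; instead it exhibits an explicit dense product set inside $A$ and inside $B$, using two deep quantitative results from \cite{mbs} (the paper's Theorems~A and~B). For $A$, the paper shows that for \emph{every} polynomial $a$ with free term $1$ and \emph{every} non-zero $f\in C_0[0,1]$ one has $\ln\|R_a^nf\|_1\sim c\,n^{1/2}$ with $c>0$ (Theorem~A applied to $R_a=I+V(I+W)$), and the set of such $(a,f)$ is already dense. For $B$, the paper factors $R_a=(I-V)T$ with $T=I+zV^k(I+W)$, $k\geq 2$, bounds $\|T^n\|_\infty=O(e^{n^{1/(k+1)}})$ by Theorem~A, and then chooses $f$ in a dense set $V(M)$ for which $\|(I-V)^nf\|_\infty=O(e^{-n^d})$ with $d<1/2$ by Theorem~B; since $1/(k+1)\leq 1/3<d$, the product decays.

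There is a genuine gap in your plan, and for $B$ it is fatal rather than merely technical. You propose to take $f=f_0+\delta'f_*$ with $f_0$ arbitrary and $\delta'$ small. But Theorem~A (which applies to $R_a$ itself, or to the factor $T$) says that for \emph{generic} $f_0$ (indeed any $f_0$ with $\inf\supp f_0<1$ not specially prepared), $\|R_a^nf_0\|_\infty$ grows at a stretched-exponential rate $e^{cn^{1/(k+1)}}$; the contribution from $\delta'f_*$ cannot overcome this, no matter how the Bessel cancellation behaves, because the prototype decay is at best polynomial or sub-stretched-exponential. So the ``technical heart'' you describe --- dominating $R_a^nf_0$ by the decaying piece --- is not a missing estimate but an impossibility. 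The correct strategy, as in the paper, is to restrict the $f$-coordinate to a dense set of vectors with built-in super-fast $(I-V)^n$-decay, not to perturb an arbitrary $f_0$. Relatedly, your argument for $A$ also relies on an unjustified ``positivity dominates'' step: $C_{\nu_{a_0}}$ and $f_0$ carry no sign information, so testing against $\int_0^1$ gives no one-sided control over the cross terms in the trinomial expansion of $(I+C_{\nu_{a_0}}+\delta V)^n$. Finally, note that both of the paper's pillars (Theorems~A and~B of \cite{mbs}) are themselves substantial results; without invoking them or proving comparable asymptotics, the bare-hands Bessel computation only settles the single prototype pair and does not yield density.
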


First, we shall prove Theorem~\ref{main2} assuming Lemma~\ref{asy}
and we shall prove the latter afterwards.

\begin{proof}[Reduction of Theorem~$\ref{main2}$ to Lemma~$\ref{asy}$]
For $n\in\N$, let
\begin{align*}
A_n&=\bigcup_{k>n}\{(a,f)\in L^1[0,1]\times
C_0[0,1]:\|R_a^kf\|_1>n\}\\
\text{and}\ \ B_n&=\bigcup_{k>n}\{(a,f)\in L^1[0,1]\times
C_0[0,1]:\|R_a^kf\|_\infty<n^{-1}\}.
\end{align*}
Obviously, the sets $A_n$ and $B_n$ are all open. Moreover,
$A\subseteq A_n$ and $B\subseteq B_n$ for each $n\in\N$, where $A$
and $B$ are defined in Lemma~\ref{asy}. By Lemma~\ref{asy}, $A_n$
and $B_n$ are dense in $L^1[0,1]\times C_0[0,1]$ for every $n\in\N$.
By the Baire theorem, $\Omega=\bigcap\limits_{n=1}^\infty (A_n\cap
B_n)$ is a dense $G_\delta$-subset of $L^1[0,1]\times C_0[0,1]$. In
particular, $\Omega$ is non-empty and we can pick $(a,f)\in\Omega$.
By the definition of $\Omega$,
$\slim\limits_{n\to\infty}\|R_a^nf\|_1=\infty$ and
$\ilim\limits_{n\to\infty}\|R_a^nf\|_\infty=0$. Thus the truncated
convolution operator $T=R_a$ and $f\in C_0[0,1]$ satisfy all desired
conditions.
\end{proof}

The proof of Theorem~\ref{main2} will be complete if we prove
Lemma~\ref{asy}. The proof of the latter is based upon the following
two theorems proved in \cite[Theorems~1.2 and~1.3]{mbs}.

\begin{thmA} Let $r>0$, $W\in\A$ be quasinilpotent, $1\leq
p\leq\infty$, $b>0$, $-\pi\leq\alpha\leq \pi$ and $T=I+
V^r(be^{i\alpha}I+W)$, where $V^r$ is the Riemann-Liouville
operator. Then, for each non-zero $f\in L^p[0,1]$,
$$
\lim_{n\to \infty}   \frac{\ln\|T^nf\|_p}{n^{1/(r+1)}} =
(r+1)b^{1/(r+1)}\Bigl(\frac{1-\inf \supp(f)}{r}\Bigr)^{r/(r+1)}
\cos_+\Bigl(\frac{\alpha}{r+1}\Bigr),
$$
where $\cos_+(t)=\max\{\cos t,0\}$. Furthermore, the norms
$\|T^n\|_p$ of the operators $T^n$ on the Banach space $L^p[0,1]$
satisfy
$$
\lim_{n\to \infty}\frac{\ln\|T^n\|_p}{n^{1/(r+1)}}=
(r+1)b^{1/(r+1)}\cos_+\Bigl(\frac{\alpha}{r+1}\Bigr).
$$
\end{thmA}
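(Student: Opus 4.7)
The plan is to expand $T^n$ by the binomial theorem and carry out a saddle-point analysis of the resulting sum, with the $W$-term treated as a controllable perturbation thanks to its quasinilpotency.

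First I would write $T = I + S$ with $S = V^r(be^{i\alpha}I + W)$, so that
\begin{equation*}
T^n f = \sum_{k=0}^{n}\binom{n}{k}S^k f = \sum_{k=0}^{n}\binom{n}{k} V^{rk}(be^{i\alpha}I+W)^k f.
\end{equation*}
Since $W\in\A$ is quasinilpotent and commutes with $V^r$, the inner binomial expansion of $(be^{i\alpha}I+W)^k$ produces, for any $\epsilon>0$, an upper bound $|(be^{i\alpha}I+W)^k\|\le (b+\epsilon)^k$ for all large $k$. This will let me argue that replacing $(be^{i\alpha}I+W)^k$ by its principal term $(be^{i\alpha})^k$ changes the sum only by a factor subexponential in $n^{1/(r+1)}$, so it suffices to analyse the model sum $\widetilde{T}^n f=\sum_k\binom{n}{k}(be^{i\alpha})^k V^{rk}f$; the $W$-contributions will have to be revisited at the end to make sure they do not interfere with the $\cos_+$ cutoff.

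Next, I would unfold $V^{rk}f(x)=\frac{1}{\Gamma(rk)}\int_0^x f(t)(x-t)^{rk-1}\,dt$ and study, for each $x$ near $1$ and $t$ near $c=\inf\supp(f)$, the weight $a_k=\binom{n}{k}(be^{i\alpha})^k (x-t)^{rk}/\Gamma(rk+1)$ as a function of $k$. Using Stirling, $\log|a_k|$ is, to leading order, a concave function of $k$ whose maximiser satisfies $nb(x-t)^r = r^r k^{r+1}$, i.e.\ $k_\ast\sim n^{1/(r+1)}(b(x-t)^r/r^r)^{1/(r+1)}$. Substituting $k_\ast$ back yields the real-part exponent
\begin{equation*}
(r+1)\,b^{1/(r+1)}(x-t)^{r/(r+1)}\,r^{-r/(r+1)}\,n^{1/(r+1)}\cos\!\Bigl(\tfrac{\alpha}{r+1}\Bigr),
\end{equation*}
since the $e^{ik\alpha}$ phase is absorbed by shifting the saddle into the complex plane by angle $\alpha/(r+1)$. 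Maximising $(x-t)^{r/(r+1)}$ under $0\le t\le x\le 1$ with $t\ge c$ gives the factor $((1-c)/r)^{r/(r+1)}$ claimed in the theorem. When $|\alpha/(r+1)|\ge\pi/2$, the complex saddle becomes unreachable on the positive real axis and Cauchy-type cancellation among the $a_k$ forces the sum to grow subexponentially in $n^{1/(r+1)}$, producing the $\cos_+$ cutoff.

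For the upper bound in $L^p$, I would use Young's inequality $\|V^{rk}f\|_p\le \|f\|_p/\Gamma(rk+1)$ together with a uniform Stirling estimate on the coefficients, then apply the Laplace method to the resulting scalar sum. For the lower bound, I would test against $x\in[1-\eta,1]$ and use the fact that $f$ does not vanish a.e.\ on $[c,c+\eta]$ to extract a pointwise lower bound of the correct order, via a standard partial-sum argument around $k_\ast$. The operator-norm statement follows by choosing $f$ with $c=\inf\supp(f)$ arbitrarily close to $0$ for the lower bound, and by taking the supremum over $f$ (equivalently, bounding $\|V^{rk}\|_p\le 1/\Gamma(rk+1)$) for the upper bound.

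The main obstacle will be the complex saddle regime: when $\alpha$ is close to $\pm(r+1)\pi/2$, the cancellation among oscillating terms must be controlled with enough precision to match the exact threshold $\cos_+$, and one must verify that the subexponentially small $W$-correction cannot create spurious growth beyond this threshold. I expect this step to require a Phragm\'en--Lindel\"of-type argument on the entire function $z\mapsto\sum_k\binom{n}{k}z^k(x-t)^{rk}/\Gamma(rk+1)$ of the parameter $z=be^{i\alpha}$, combined with the completely regular growth machinery already used in Lemma~\ref{ML}.
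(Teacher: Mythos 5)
First, a point of fact: the paper does not prove Theorem~A at all — it is imported verbatim from \cite[Theorems~1.2 and~1.3]{mbs} and used as a black box in the proof of Lemma~\ref{asy} — so there is no internal argument to compare yours with. Judged on its own merits, your outline correctly identifies the model sum, the saddle $k_\ast\sim\bigl(nb(x-t)^r/r^r\bigr)^{1/(r+1)}$ and the resulting constant $(r+1)b^{1/(r+1)}\bigl((1-\inf\supp(f))/r\bigr)^{r/(r+1)}$, i.e.\ the heuristic skeleton is right. But as a proof it has genuine gaps, and they sit exactly at the steps you defer (``will have to be revisited'', ``I expect this step to require\dots'').

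Concretely: (1) for $\alpha\neq0$ your upper bound is obtained by applying Young's inequality termwise in $k$ and then the Laplace method to the resulting positive scalar sum; this throws away the phases $e^{ik\alpha}$ and can only ever give the exponent with $\cos(\alpha/(r+1))$ replaced by $1$. To reach the stated rate you must establish asymptotics of the oscillatory kernel $\sum_k\binom{n}{k}(be^{i\alpha})^k u^{rk-1}/\Gamma(rk)$, a Mittag--Leffler--type analysis in which the cancellation is the whole point, not a refinement to be added later. (2) The lower bound for an \emph{arbitrary} non-zero $f\in L^p$ is not a ``standard partial-sum argument'': $f$ may change sign and oscillate arbitrarily near $c=\inf\supp(f)$ and the kernel is complex-valued, so the integral against $f$ can itself cancel; ruling this out for every such $f$ is the hardest part of \cite{mbs} and is done there with entire-function growth machinery, not by pointwise saddle estimates. (3) When $\cos_+(\alpha/(r+1))=0$ the assertion $\lim=0$ contains an anti-decay estimate $\ln\|T^nf\|_p\geq-o(n^{1/(r+1)})$; none of your tools (modulus bounds, Laplace method, partial sums) produce lower bounds of this kind — this is the content of statements like Theorem~B — and your sketch does not address this direction at all. (4) The reduction of $W$: quasinilpotency does give $\|(be^{i\alpha}I+W)^k\|=b^ke^{o(k)}$, which is harmless for modulus upper bounds since $k\sim n^{1/(r+1)}$, but for the lower bounds and in the cancellation regime the cross-terms $\binom{k}{j}b^{k-j}W^j$ interact with $V^{rk}$ and cannot simply be declared subexponential noise; they have to be carried through the kernel asymptotics. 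So the proposal is a reasonable roadmap, but the theorem's difficulty lies precisely in the four items above, none of which is carried out.
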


\begin{thmB}Let $c>0$, $1\leq p\leq\infty$, $V$ be the Volterra operator
and let $X$ be the set of positive monotonically non-increasing
sequences $a=\{a_n\}_{n=0}^\infty$ such that
$\sum\limits_{n=1}^{\infty}\frac{\ln a_n}{n^{3/2}}>-\infty$. Then
for any non-zero $f\in L_p[0,1]$, there exists $a\in X$ for which
$a_n\leq \|(I-cV)^nf\|_p$ for any $n\in\N$. Conversely for any $a\in
X$ there exists a non-zero $f\in L_p[0,1]$ for which
$\|(I-cV)^nf\|_p\leq a_n$ for any $n\in\N$. Moreover if $1\leq
p<\infty$ then the set of $f\in L_p[0,1]$ for which
$\|(I-cV)^nf\|_p=O(a_n)$ is dense in $L_p[0,1]$.
\end{thmB}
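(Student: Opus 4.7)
The plan is to translate the decay rate of $\|(I-cV)^nf\|_p$ into the singularity type of a vector-valued holomorphic function at $\lambda=1$, thereby reducing Theorem~B to classical results on entire functions of exponential type. The exponent $3/2$ in the summability condition reflects the $\sqrt n$ scaling inherent to $(I-cV)$, consistent with Theorem~A applied with $r=1$ and $\alpha=\pi$ (where the leading rate $\cos_+(\pi/2)=0$ vanishes, so one needs finer information than a spectral-radius argument).

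The first step would be to introduce the $L^p$-valued generating function
$$F_f(\lambda) = \sum_{n=0}^\infty (I-cV)^n f \cdot \lambda^n = ((1-\lambda)I + c\lambda V)^{-1} f,$$
which is holomorphic on $\C\setminus\{1\}$ because $V$ is quasinilpotent, so $(1-\lambda)I + c\lambda V = (1-\lambda)\bigl(I + \tfrac{c\lambda}{1-\lambda}V\bigr)$ is invertible whenever $\lambda\neq 1$. The substitution $w = c\lambda/(1-\lambda)$ converts the singularity at $\lambda=1$ into the $w\to\infty$ behavior of $(I+wV)^{-1}f$; for $w>0$ the explicit kernel $((I+wV)^{-1}f)(x) = f(x) - w\int_0^x e^{-w(x-t)}f(t)\,dt$ exhibits how the regularity of $f$ near $x=0$ dictates this decay. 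As a sanity check, $f\equiv 1$ gives $(I-cV)^n 1 = L_n(cx)$ via binomial expansion with $V^k 1 = x^k/k!$, so classical Laguerre asymptotics yield $\|(I-cV)^n 1\|_p\asymp n^{-1/4}$, and the sequence $a_n=n^{-1/4}$ indeed lies in $X$.

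For the upper-bound part of Theorem~B, given $a\in X$ I would construct a non-zero $f$ satisfying $\|(I-cV)^nf\|_p\leq a_n$ via an inverse Laplace-type construction: after the substitution $t=\sqrt n$, the condition $\sum_n \ln a_n/n^{3/2} > -\infty$ is equivalent to $\int_1^\infty \ln M(t)/t^2\,dt < \infty$ for the associated envelope $M$, which is the classical Bernstein--Cartwright condition guaranteeing an entire function of exponential type with prescribed decay on $[0,\infty)$; pulling this function back through the substitution yields $f$ with $\inf\supp(f)=0$ achieving the bound. For the lower-bound part, I would argue by contradiction: if $\|(I-cV)^nf\|_p$ decays faster than any $a\in X$ allows, then $F_f$ has a singularity at $\lambda=1$ so weak that a Carleman quasi-analyticity argument tuned to the $\sqrt n$ scale forces $F_f\equiv 0$, hence $f=0$. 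Density for $1\leq p<\infty$ then follows because $\mathcal D_a=\{g\in L^p[0,1]:\|(I-cV)^n g\|_p = O(a_n)\}$ is a linear subspace invariant under the algebra $\A$ (since each $C_\mu\in\A$ commutes with $(I-cV)^n$ and $\|C_\mu h\|_p\leq\|\mu\|\|h\|_p$), and the constructed $f$ has $\inf\supp(f)=0$; using Proposition~\ref{ele1} together with the classical fact that the invariant subspaces of $V$ on $L^p[0,1]$ form the chain $\{L^p[a,1]\}_{a\in[0,1]}$, the $\A$-orbit of $f$ is dense in $L^p[0,1]$, and hence so is $\mathcal D_a$.

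The main obstacle is matching the exact exponent $3/2$ in the lower-bound step: one must apply a sharp Cartwright-type theorem to the vector-valued function $F_f$ along rays approaching $\lambda=1$, exactly at the critical scale where Theorem~A provides only the trivial rate $0$. This requires careful Phragm\'en--Lindel\"of analysis of $F_f$ in the punctured disk around $\lambda=1$ and a version of the Denjoy--Carleman quasi-analyticity criterion adapted to the $\sqrt n$ regime, and it is the technical heart of the proof.
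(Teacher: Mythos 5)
First, note that this paper does not prove Theorem~B at all: it is quoted verbatim from \cite{mbs} (Theorems~1.2 and~1.3 there), and in the present article it is used only as an input to Lemma~\ref{asy}. So there is no in-paper proof to compare with; your proposal has to be judged as a self-contained argument, and as such it has a genuine gap. The overall roadmap is reasonable and is in the spirit of \cite{mbs} --- convert the decay of $\|(I-cV)^nf\|_p$ into growth/decay data for an analytic transform, and invoke Levinson--Cartwright-type log-integral conditions at the $\sqrt n$ scale --- but both load-bearing steps are only named, not carried out. For the converse (construction) part you say that the condition $\sum\ln a_n/n^{3/2}>-\infty$ becomes, after $t=\sqrt n$, the classical condition $\int\ln M(t)\,t^{-2}\,dt>-\infty$ and that a Bernstein--Cartwright existence theorem plus ``pulling back through the substitution'' produces $f$; but no mechanism is given that converts an entire function of exponential type with prescribed decay on a ray into a function $f\in L^p[0,1]$ with $\|(I-cV)^nf\|_p\leq a_n$ for \emph{every} $n$ (with the correct constants, not just $O(a_n)$). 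For the direct (lower bound) part, the statement you need is precisely a sharp quasi-analyticity theorem: if $\sum \ln\|(I-cV)^nf\|_p\,/\,n^{3/2}=-\infty$ then $f=0$; invoking ``a Carleman argument tuned to the $\sqrt n$ regime'' is a restatement of the goal, not a proof. Moreover, the vector-valued resolvent $F_f(\lambda)=((1-\lambda)I+c\lambda V)^{-1}f$ has an essential singularity of unbounded order at $\lambda=1$, so before any Cartwright/Phragm\'en--Lindel\"of machinery applies one must first pass to a scalar entire function of exponential type (e.g.\ a Laplace transform of $f$ tested against the explicit Laguerre-type kernels of $(I-cV)^n$) and establish two-sided estimates linking its indicator/regular growth to $\inf\supp(f)$ and to $\|(I-cV)^nf\|_p$; this is exactly the technical heart you acknowledge omitting, and it is where the exponent $3/2$ is actually earned.

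Two smaller points. Your sanity check is off at the endpoint: since $L_n(0)=1$, $\|(I-cV)^n1\|_\infty$ does not decay like $n^{-1/4}$ (and for $p\geq 4$ the exponent changes), though this does not affect the architecture. The density argument at the end is essentially sound and is the cheap part of the theorem: if the constructed $f$ satisfies $\|(I-cV)^nf\|_p\leq a_n$ and $\inf\supp(f)=0$, then $\|C_\mu(I-cV)^nf\|_p\leq\|\mu\|a_n$ for all $\mu\in\M$, and $\{C_\mu f:\mu\in\M\}$ is a dense linear subspace by unicellularity of $V$ (invariant subspace chain $L^p[a,1]$), so $\{g:\|(I-cV)^ng\|_p=O(a_n)\}$ is dense. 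But you should justify that $f$ can indeed be taken with $\inf\supp(f)=0$: either build it into the construction, or note that if $s=\inf\supp(f)>0$ then the cut-off translate $(P_sf)(x)=f(x+s)$ (extended by $0$) is non-zero, commutes with $(I-cV)^n$ on such $f$, is a contraction in $L^p$, and has support with infimum $0$, so it inherits the bound $\|(I-cV)^nP_sf\|_p\leq a_n$. As it stands, the proposal is an outline whose decisive analytic content (both the existence construction and the sharp converse at exponent $3/2$) is missing.
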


\begin{proof}[Proof of Lemma~$\ref{asy}$] Take
$a(x)=1+a_1x+{\dots}+a_nx^n$ being a polynomial with the free term
$1$ and $f$ being any non-zero function from $C_0[0,1]$. Then
$s=\inf\supp(f)\in[0,1)$. It is easy to see that
$R_a=I+V+\frac{a_1}{2}V^2+{\dots}+\frac{a_n}{n!}V^{n+1}$. Hence
$R_a=I+V(I+W)$, where $W$ is a quasinilpotent operator from $\A$. By
Theorem~A with $b=r=p=1$ and $\alpha=0$,
$$
\lim_{n\to \infty}   \frac{\ln\|R_a^nf\|_1}{n^{1/2}} =
2(b(1-s))^{1/2}>0.
$$
It immediately follows that $\|R_a^nf\|_1\to\infty$. Thus $(a,f)\in
A$ for every non-zero $f\in C_0[0,1]$ and every polynomial $a$ with
the free term 1. Since the set of such polynomials is dense in
$L^1[0,1]$, $A$ is dense in $L^1[0,1]\times C_0[0,1]$.

Assume now that $a(x)=-1+a_1x+{\dots}+a_nx^n$ is a polynomial with
the free term $-1$. Then
$R_a=I-V+\frac{a_1}{2}V^2+{\dots}+\frac{a_n}{n!}V^{n+1}=(I-V)(I+zV^k(I+W))$
with $z\in\K\setminus\{0\}$, $k\geq 2$ and $W$ being a
quasinilpotent operator from $\A$. Pick $b>0$ and
$\alpha\in(-\pi,\pi]$ such that $z=be^{i\alpha}$. Then $R_a=(I-V)T$,
where $T=I+be^{i\alpha}V^k(I+W)$. By Theorem~A,
\begin{equation}\label{tn}
\lim_{n\to \infty}\frac{\ln\|T^n\|_\infty}{n^{1/(k+1)}}=
(k+1)b^{1/(k+1)}\cos\Bigl(\frac{\alpha}{k+1}\Bigr).
\end{equation}
Pick your favorite numbers $c,d$ such that $\frac13<c<d<\frac12$ and
consider the sequence $s_n=e^{-n^{d}}$ for $n\in\N$. Since
$d<\frac12$, $\sum\limits_{n=1}^{\infty}\frac{\ln
s_n}{n^{3/2}}>-\infty$ and therefore Theorem~B implies that the set
$$
M=\{g\in L^1[0,1]:\|(I-V)^ng\|_1=O(s_n)\}
$$
is a dense subset of $L^1[0,1]$. Since $V:L^1[0,1]\to C_0[0,1]$ is a
bounded linear map with dense range, $V(M)$ is a dense subset of
$C_0[0,1]$. Since for every $g\in M$, $\|(I-V)^nVg\|_\infty\leq
c\|(I-V)^ng\|_1$, where $c$ is the norm of $V$ as an operator from
$L^1[0,1]$ to $C_0[0,1]$, we see that $\|(I-V)^nf\|_\infty=O(s_n)$
for every $f\in V(M)$. Hence
$$
\|R_a^nf\|_\infty=\|T^n(I-V)^nf\|_\infty\leq
\|T^n\|_\infty\|(I-V)^nf\|_\infty=O(s_n\|T^n\|_\infty)\ \ \text{for
each $f\in V(M)$}.
$$
Since $k\geq 2$ and $c>\frac13$, from (\ref{tn}) it follows that
$\|T^n\|_\infty=O(e^{n^c})$. Since $s_n=e^{-n^d}$, by the above
display, $\|R_a^n\|_\infty=O(e^{n^c-n^d})$. Since $d>c$,
$e^{n^c-n^d}\to 0$ and therefore $\|R_a^nf\|_\infty\to 0$ for $f\in
V(M)$. Thus $(a,f)\in B$ if $f\in V(M)$ and $a$ is a polynomial with
the free term $-1$. Since the set of such polynomials is dense in
$L^1[0,1]$ and $V(M)$ is dense in $C_0[0,1]$, $B$ is dense in
$L^1[0,1]\times C_0[0,1]$.
\end{proof}

The following questions remains open.

\begin{question}\label{q1} Does there exist a truncated convolution
operator $T$ on $L^2[0,1]$ such that every non-zero $f\in L^2[0,1]$
is an irregular vector for $T$?
\end{question}

As we have mentioned, for $1<p<\infty$ there are continuous linear
operators on $L^p[0,1]$ commuting with $V$ other than truncated
convolutions. Thus the following question remains open. Although
probably a negative answer could be obtained by a not so
sophisticated modification of the proof of Theorem~\ref{mai1}.

\begin{question}\label{q2} Let $1<p<\infty$. Does there exist a weakly
supercyclic tuple of continuous linear operators on $L^p[0,1]$
commuting with $V$?

\end{question}

\vfill \break


\small\rm

\vskip1truecm

\scshape

\noindent Stanislav Shkarin

\noindent Queens's University Belfast

\noindent Pure Mathematics Research Centre

\noindent University road, Belfast, BT7 1NN, UK

\noindent E-mail address: \qquad {\tt s.shkarin@qub.ac.uk}

\end{document}